\PassOptionsToPackage{dvipsnames}{xcolor}
\documentclass[a4paper,12pt]{article}

\usepackage{a4wide,amssymb,amsmath,amsthm,mathtools,thmtools, url,enumerate,float,lineno}
\usepackage{bezier,amsfonts,amssymb,graphicx,amsthm,url}
\usepackage[utf8]{inputenc}
\usepackage{caption}
\usepackage{subcaption}
\usepackage{algorithm}
\usepackage{algpseudocode}
\usepackage{amsmath}
\usepackage{amsfonts}
\usepackage{amssymb,amsthm}
\usepackage{url}
\usepackage{hyperref}
\usepackage{tikzit}
\usepackage{xparse}
\usepackage{cleveref}
\usepackage{mmacells}

\usepackage[dvipsnames]{xcolor}
\hypersetup{
	colorlinks=true,
	pdfpagemode=UseNone,
	citecolor=OliveGreen,
	linkcolor=NavyBlue,
	urlcolor=BlueGreen,
	pdfstartview=FitW
}

\newtheorem{theorem}{Theorem}
\newtheorem{definition}{Definition}
\newtheorem{proposition}{Proposition}
\newtheorem{corollary}{Corollary}
\newtheorem{lemma}{Lemma}
\newtheorem{remark}{Remark}
\newtheorem{observation}{Observation}
\newtheorem{problem}{Problem}

\newcommand{\case}[1]{\medskip\noindent\textit{Case #1.} }

\tikzstyle{vertex}=[fill=black, draw=black, shape=circle, thick, scale=0.5]
\tikzstyle{blue_clique}=[fill=white, draw=blue, shape=circle, ultra thick]
\tikzstyle{clique}=[fill=white, draw=black, shape=circle]
\tikzstyle{red_vertex}=[fill=red, draw=red, shape=circle, scale=0.75, thick]

\tikzstyle{edge}=[-, fill={rgb,255: red,0; green,128; blue,128}]
\tikzstyle{dir_edge}=[->]
\tikzstyle{blue_edge}=[-, thick, draw=blue]
\tikzstyle{red_edge}=[-, thick, draw=red]
\tikzstyle{box}=[-, fill=gray, fill opacity=0.2]
\tikzstyle{dashed_edge}=[-, dashed]
\tikzstyle{dashed_red_edge}=[-, draw=red, dashed]
\tikzstyle{dashed_blue_edge}=[-, draw=blue, dashed]
\tikzstyle{dashed_edge_green}=[-, draw=green, dashed]
\tikzstyle{green_edge}=[-, draw=green]
\tikzstyle{B}=[-, very thick, draw={rgb,255: red,31; green,119; blue,180}]
\tikzstyle{R}=[-, very thick, draw={rgb,255: red,214; green,39; blue,40}]
\tikzstyle{O}=[-, very thick, draw={rgb,255: red,255; green,127; blue,14}]
\tikzstyle{G}=[-, very thick, draw={rgb,255: red,44; green,160; blue,44}]
\tikzstyle{P}=[-, very thick, draw={rgb,255: red,148; green,103; blue,189}]
\tikzstyle{T}=[-, very thick, draw={rgb,255: red,23; green,190; blue,207}]
\tikzstyle{Bd}=[-, very thick, draw={rgb,255: red,31; green,119; blue,180}, dashed]
\tikzstyle{Od}=[-, very thick, dashed, draw={rgb,255: red,255; green,127; blue,14}]
\tikzstyle{Gd}=[-, very thick, draw={rgb,255: red,44; green,160; blue,44}, dashed]
\tikzstyle{Rd}=[-, very thick, draw={rgb,255: red,214; green,39; blue,40}, dashed]
\tikzstyle{Pd}=[-, very thick, dashed, draw={rgb,255: red,148; green,103; blue,189}]
\tikzstyle{Td}=[-, very thick, draw={rgb,255: red,23; green,190; blue,207}, dashed]

 \tikzset{every picture/.style={font issue=\footnotesize},
          font issue/.style={execute at begin picture={#1\selectfont}}
         }

\newcommand{\ex}{\mathrm{ex}}
\newcommand{\KMP}{\mathcal{K}_{\mathrm{MP}}}
\newcommand{\KMOP}{\mathcal{K}_{\mathrm{MOP}}}
\newcommand{\K}{\mathcal{K}}
\newcommand{\MP}[2]{r_{\mathrm{MP}}\left(#1, #2\right)}
\newcommand{\Wd}[2]{\mathrm{Wd}\left(#1, #2\right)}
\newcommand{\MOP}[2]{r_{\mathrm{MOP}}\left(#1, #2\right)}

\title{Ramsey properties of maximal (outer)planar graphs}
\author{\begin{tabular}{ccc}Adriana Baldacchino & Yair Caro & Xandru Mifsud \\ {\small University of Birmingham} & {\small University of Haifa-Oranim} & {\small University of Oxford} \\ \href{mailto:axb2191@student.bham.ac.uk}{\small axb2191@student.bham.ac.uk} & \href{mailto:yacaro@kvgeva.org.il}{\small yacaro@kvgeva.org.il} & \href{mailto:xandru.mifsud@cs.ox.ac.uk}{\small xandru.mifsud@cs.ox.ac.uk}\end{tabular}}
\date{}

\DeclareGraphicsExtensions{.pdf,.png,.jpg}

\begin{document}

\maketitle

\abstract{
We study a natural extension of Ramsey theory relative to the classes of maximally planar and maximally outerplanar graphs. This can be seen as a continuation of the study of \emph{`Planar Ramsey theory'}, introduced by Axenovich et al. The question we ask is the following: For a fixed family $\mathcal{K}$ of graphs and a pair of graphs  $\{H,F\}$, does there exist an integer $r_{\mathcal{K}} (H, F)$ such that for every graph $G \in \mathcal{K}$ with $|G| \geq r_{\mathcal{K}}(H, F)$, every red/blue edge-colouring of $G$ admits a red copy of $H$ or a blue copy of $F$? When such an integer exists, we say $\{H,F\}$ is \emph{unavoidable in $\mathcal{K}$}, and otherwise $\{H,F\}$  is \emph{avoidable in $\mathcal{K}$}.

Our work focuses on this problem where  $\mathcal{K} = \mathcal{K}_{\mathrm{MOP}}$ and $\mathcal{K} = \mathcal{K}_{\mathrm{MP}}$, which denote the families of maximal outerplanar (MOP) graphs and maximal planar (MP) graphs, respectively. This framework generalises the classical Ramsey problem relative to these classes, as the case with $\mathcal{K} = \{K_n \colon n \geq 2\}$ corresponds to classical Ramsey. We also study the corresponding Ramsey numbers for MOP and MP, which we denote as $r_{\mathrm{MOP}}(H, F)$ and $r_{\mathrm{MP}}(H, F)$. 

 In the case when $\mathcal{K} = \mathcal{K}_{\mathrm{MOP}}$, we completely determine all unavoidable pairs $\{H, F\}$ with $|E(F)| \geq 2$, together with upper bounds and sometimes exact values of $r_{\mathrm{MOP}}(H, F)$. 
 When $\mathcal{K} = \mathcal{K}_{\mathrm{MP}}$, we completely determine all unavoidable pairs in the diagonal case $\{H, H\}$ when $H$ is connected, showing that $H$ must be one of the graphs $P_3$, $P_4$, $P_5$, $K_{1, 3}$ or the fork graph $S_{2,1,1}$.

This work opens up further possibilities in the study of Ramsey theory relative to a class, and we offer several open problems in this vein.
}

\section{Introduction}

Planar variants of Ramsey theory have garnered significant attention throughout the years \cite{AxenovichSchadeThomassenUeckerdt2019, goetze2020pnfree,MaTangYu2020, SteinbergTovey1993}. Our work is inspired by recent work of Axenovich et al.\ on planar Ramsey theory, in which they study graphs which are considered Ramsey relative to planar graphs \cite{AxenovichSchadeThomassenUeckerdt2019}. This work was further developed in~\cite{goetze2020pnfree,MaTangYu2020, DBLP:journals/ajc/BiroW22}, and in particular~\cite{goetze2020pnfree} investigated  Ramsey relative to outerplanar graphs. 
We study a variant of planar Ramsey for maximal outerplanar and outerplanar graphs, which differs slightly from the variant present in~\cite{AxenovichSchadeThomassenUeckerdt2019,MaTangYu2020,goetze2020pnfree}.
In order to proceed, we first clarify our notion of Ramsey relative to a class, by defining the $\mathcal{K}$-unavoidable pairs of graphs.

\begin{definition}[(Un)avoidable pair] \label{def:umpp}
Given graphs $\{H, F\}$, and a class $\mathcal{K}$, we say that  the pair $\{H,F\}$ is \emph{$\mathcal{K}$-unavoidable} if there exists an $N$ such that for every $G \in \mathcal{K}$ of order at least $N$, every red/blue edge-colouring of $G$ contains a red copy of $H$ or blue copy of $F$.  Furthermore, $\{H,F\}$ is \emph{$\mathcal{K}$-avoidable} if $\{H,F\}$ is not $\mathcal{K}$-unavoidable.
\end{definition}

This definition is well within the spirit of Ramsey theory restricted to families of graphs, as it reduces to the classical Ramsey case when $\mathcal{K} = \{K_n \colon n \in \mathbb{N}\}$.  Our work focuses on two instances of this problem for maximal planar graphs $\mathcal{K} = \KMP$ and maximal outerplanar graphs $\mathcal{K} = \KMOP$. We use $\MOP{H}{F}, \MP{H}{F}$  to denote the corresponding Ramsey numbers, respectively.

In this paper, we fully characterise the pairs $\{H, F\}$ of graphs unavoidable in $\KMOP$, as summarised in the following theorem, whilst also providing (occasionally exact) bounds on the Ramsey number $\MOP{H}{F}$ for each case.

\begin{restatable}{restatabletheorem}{MOPGeneral} \label{thm:unavoidable-MOP-general}
    Let $H, F$ be graphs and let $t, t_1, t_2$ be non-negative integers. Then, $\{H,F\}$ is unavoidable (up to symmetry) in $\KMOP$ if and only if, 
	\begin{itemize}
		\item there exists $N(H) \geq |H|$ such that every maximal outerplanar graph on at least $N(H)$ vertices has a copy of $H$ and $F \simeq K_2$ \quad \emph{or}
		\item $H$ is a subgraph of $S_{2,2,1} \cup t K_2$ and $F \simeq P_3$ or $F \simeq K_2$  \quad \emph{or}
		\item $H$ is a subgraph of $P_5 \cup t_1 K_2$ and $F$ is a subgraph of $P_4 \cup t_2 K_2$.
	\end{itemize}
\end{restatable}

We also fully characterise the connected graphs $H$ for which $\{H, H\}$ is unavoidable in $\KMP$, as summarised in the following theorem, giving lower bounds on $\MP{H}{H}$ when possible and giving some preliminary results for the off-diagonal case.

\begin{restatable}{restatabletheorem}{MPDiagonal} \label{thm:diag_H_conn_mp}
    If $H$ is a connected graph such that $\{H, H\}$ is unavoidable in $\KMP$, then $H$ is either a tree on at most four vertices, the path $P_5$, or the fork graph $S_{2,1,1}$.
\end{restatable}

\subsection{Related work} \label{sec:relatedwork}

For the discussion below, we shall refer to our avoidability condition as the \textit{strong unavoidable condition}. The notion of unavoidable developed in~\cite{AxenovichSchadeThomassenUeckerdt2019,MaTangYu2020,goetze2020pnfree} is distinct: A graph $H$ is referred to as $\mathcal{K}$-unavoidable in these works if and only if there exists $G \in \mathcal{K}$ such that every red/blue edge-colouring of $G$ contains a monochromatic copy of $H$. Henceforth we will refer to this condition as the \textit{weak unavoidability condition}.
The difference here is if $G$ is a witness of the unavoidability of $H$, there may still be a graph in $\mathcal{K}$ larger than $G$ which has a red/blue edge-colouring  that does not contain $H$. Another point of difference to note is that in our case, we consider pairs of unavoidable graphs, while previous work focuses only on the diagonal case. However, the definitions coincide for the classical Ramsey case with $\mathcal{K} = \{K_n : n\in \mathbb{N}\}$.

Some connections can be made between the two definitions, as in particular if a graph is strongly unavoidable for a class $\mathcal{K}$, it is also weakly unavoidable. Moreover, if $H$ is weakly unavoidable with $G$ as the witness graph, then every graph in the class containing $G$ is also a witness of $H$ being weakly unavoidable. However, this is by no means forces that every sufficiently large graph is a witness of $H$ strongly unavoidable. In fact the two definitions do not coincide for the classes of planar, outerplanar and maximal outerplanar graphs. A clear example of this distinction arises in considering the star $K_{1, k}$. For every $k \geq 2$, the star $K_{1, k}$ is weakly unavoidable for the class of maximal planar graphs, while as we shall prove later, $\{K_{1,4} , K_{1,4}\}$ is avoidable in maximal planar graphs under the stronger unavoidability model. 

The differences between the two models translates to a difference in proofs. To show that a graph $H$ is weakly unavoidable, one can simply provide a witness for this fact, that is, one provides a graph $G\in \mathcal{K}$ such that every red/blue edge-colouring of $G$ contains a monochromatic copy of $H$. Showing avoidability is significantly harder, as one needs to prove that for every graph $G$, there exists a red/blue edge-colouring of $G$ that does not contain a copy of $H$. The difficulty flips in our case. Showing avoidability of a graph $H$ under the strong unavoidability model requires supplying an infinite set of positive integers $S$ and family of graphs $(G_i)_{i\in S}$ from $\mathcal{K}$ such that, for every $i \in S$, there exists a red/blue edge-colouring of $G_i$ that does not contain a monochromatic copy of $H$. On the other hand, for strong unavoidability we need a proof that holds for all sufficiently large graphs in our class and for every red/blue edge-colouring of these graphs.

In our formulation, we also study the Ramsey number relative to a class $\mathcal{K}$, denoted by $r_{\mathcal{K}}(H,F)$, which is the least integer required such that every $G \in \mathcal{K}$ of order at least $r_{\mathcal{K}}(H,F)$, every red/blue edge-colouring of $G$ contains a red copy of $H$ or a blue copy of $F$.  
A notion of planar Ramsey numbers for the weakly unavoidable case has also been studied in \cite{goetze2020pnfree}. Because of the difference in formulation, our notion of Ramsey number is stronger as it has implications for every large enough graph in our class, while the formulation in \cite{goetze2020pnfree} only implies that there exists a graph of order $N$ which is Ramsey for the particular graph. Another variant of ‘planar Ramsey numbers’ has been studied in \cite{SteinbergTovey1993}, where the authors calculate the least $N$ such that every planar graph or its complement contains a particular graph. Both these notions are distinct from ours, however they highlight an interest in these kinds of problems.

Another related line of work is in the study of planar Tur{\'a}n, first studied in \cite{first-planar-turan}. The question investigated here is as follows: Given a positive integer $n$ and a graph $H$, what is the maximum number of edges in an $H$-free planar graph on $n$ vertices? This number is referred to as the \textit{planar Tur{\'a}n number} and denoted by  $\ex_{\mathcal{P}}(n, H)$. Planar Tur{\'a}n numbers have been extensively studied, see \cite{GLZ, SHI2025104134} and references therein. 

A variant of these numbers has also been studied for outerplanar graphs, where we ask given a positive integer $n$ and a graph $H$, what the maximum number of edges is in an $H$-free \emph{outer}planar graph on $n$ vertices, denoted as $\ex_{\mathcal{OP}}(n, H)$.  Various work has been carried out in this area, including outerplanar numbers of paths and cycles \cite{cycles-pahts}, disjoint copies of paths \cite{disjointcopies} and  double stars \cite{outerplanarturannumberdoublestars}.

Our work is closely related to these Tur{\'a}n numbers, as given we study maximal planar and maximal outerplanar graphs, our graphs on $n$ vertices have exactly  $3n-6$ and $2n-3$ edges. This allows us to utilise results on planar Tur{\'a}n numbers in our work. Furthermore, if we were to restrict our definition to use only one colour, the unavoidable graphs are exactly those for which the planar/outerplanar Tur{\'a}n number is less than $3n-6$ and $2n-3$ respectively. Therefore, the two questions are closely related, where investigations in (outer)planar Tur{\'a}n numbers can be seen as the density counterpart of our maximal (outer)planar Ramsey problem.

\subsection{Notation}

We next establish our working notation. Given a graph $G$, by $V(G)$, $E(G)$, $\delta(G)$ and $\Delta(G)$ we denote the vertex set, edge set, minimum degree and maximum degree of $G$, respectively. When there is no room for ambiguity, we shall omit any reference to $G$ in our notation and simply write $V$, $E$, $\delta$ and $\Delta$.

Given an integer $n \geq 1$, by $P_n$ we denote the path on $n$ vertices. A \textit{linear forest} is a forest where every component is a path. For $n \geq 3$, by $C_n$ we denote the cycle on $n$ vertices and by $W_n$ we denote the \textit{wheel graph}, obtained by taking a a cycle $v_1, \dots, v_n$ and adding a vertex $c$ adjacent to each $v_i$ for $1 \leq i \leq n$. 

Also, given integers $n, m \geq 1$, by $K_n$ we denote the complete graph on $n$ vertices, and by $K_{n, m}$ we denote the complete bipartite graph with partite sets of sizes $n$ and $m$. By $\Wd{n}{m}$ we denote the \textit{windmill graph}, constructed by coalescing $m$ copies of $K_n$ at a vertex. Lastly, given integers $p \geq q \geq r \geq 1$, by $S_{p,q,r}$ we denote the \textit{spider graph} on $p + q + r + 1$ vertices, constructed by coalescing three paths $P_{p+1}, P_{q+1}$ and $P_{r+1}$ at a leaf. The \textit{fork graph} is the spider $S_{2,1,1}$.

For any two graphs $G$ and $H$, by $G \cup H$ we denote the disjoint union of $G$ and $H$, having vertex set $V(G) \cup V(H)$ and edge set $E(G) \cup E(H)$. Furthermore, given an integer $k \geq 2$, we use $k G$ as short-hand for the disjoint union of $k$ copies of $G$.

The \textit{join} $G + H$ of $G$ and $H$ is $G \cup H$ together with all edges between $V(G)$ and $V(H)$. Given positive integers $m$ and $n$, by $F_{m, n}$ we denote the \textit{fan graph} $m K_1 + P_n$.

\subsection{Organisation of paper}

In Section \ref{sec:preliminaries} we will give a number of useful observations and techniques which will be used throughout the paper. In Section \ref{sec:mop} we will prove Theorem \ref{thm:unavoidable-MOP-general}, and in Section \ref{sec:mp} we will prove Theorem \ref{thm:diag_H_conn_mp}. Throughout we will also determine bounds and occasionally exact values on the Ramsey number of a given unavoidable pair in $\KMOP$ and/or $\KMP$. For several such instances, we will employ a combination of proof and exhaustive computer searches to check whether a given pair is (un)avoidable in small maximal (outer)planar graphs; details may be found in Appendix \ref{sec:code}.

\section{Basic observations and technical lemmas} \label{sec:preliminaries}

In this section we will give a number of useful observations and techniques which we will use throughout. We begin by noting the following so-called \textit{monotonicity} and \textit{symmetry} properties.

\begin{observation}[Monotonicity] \label{obs:mop_monotonicity}
 Let $\K \in \{\KMOP, \KMP\}$. Let $H, F$ be graphs. Let $H^*$ and $F^*$ be subgraphs of $H$ and $F$, respectively. 

 \begin{enumerate}[(i)]
 \item If $\{H^*, F^*\}$ is avoidable in $\K$, then $\{H, F\}$ is avoidable in $\K$.
 \item If $\{H, F\}$ is unavoidable in $\K$, then $\{H^*, F^*\}$ is unavoidable in $\K$.
 \end{enumerate}
\end{observation}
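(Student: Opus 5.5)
The plan is to prove (ii) directly from Definition~\ref{def:umpp} and then obtain (i) as its contrapositive. The only fact needed is that a subgraph of a subgraph is a subgraph, so that any copy of $H$ in a coloured graph contains a copy of $H^*$, and likewise for $F$ and $F^*$.

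For (ii), assume $\{H,F\}$ is unavoidable in $\K$. By Definition~\ref{def:umpp} there is an $N$ such that every $G \in \K$ with $|G| \geq N$ has the following property: every red/blue edge-colouring of $G$ contains a red copy of $H$ or a blue copy of $F$. We will show that the same $N$ works for $\{H^*, F^*\}$.

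Fix $G \in \K$ with $|G| \geq N$ and an arbitrary red/blue colouring of $E(G)$. There are two cases.
\begin{itemize}
\item If the colouring has a red copy of $H$, i.e.\ an injective homomorphism $\varphi\colon V(H)\to V(G)$ whose image edges are all red, then restricting $\varphi$ to $V(H^*)$ gives an embedding of $H^*$. Every edge of $H^*$ is an edge of $H$, so its image is red. This yields a red copy of $H^*$.
\item If instead the colouring has a blue copy of $F$, the same restriction argument yields a blue copy of $F^*$.
\end{itemize}
Since the colouring was arbitrary, $\{H^*,F^*\}$ is unavoidable in $\K$, witnessed by the same $N$. In particular $r_{\K}(H^*,F^*) \leq r_{\K}(H,F)$.

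Statement (i) is logically equivalent to the contrapositive of (ii). If $\{H,F\}$ were unavoidable, then by (ii) so would be $\{H^*,F^*\}$, contradicting the hypothesis of (i).

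There is no real obstacle here. The only point to check is that the argument uses nothing about the class $\K$: it is the same $G$ and the same colouring throughout, so it holds for both $\KMOP$ and $\KMP$, and indeed for any class.
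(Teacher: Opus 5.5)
Your proposal is correct: restricting an embedding of $H$ (resp.\ $F$) to $V(H^*)$ (resp.\ $V(F^*)$) shows that the same $N$ witnesses unavoidability of $\{H^*,F^*\}$, and (i) is indeed the contrapositive of (ii). The paper states this as an observation without proof, and your argument is exactly the routine justification it takes for granted.
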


\begin{observation}[Symmetry] \label{obs:symmetry}
 Let $\K \in \{\KMOP, \KMP\}$. Let $H, F$ be graphs. Then, $\{H, F\}$ is unavoidable in $\K$ if, and only if, $\{F, H\}$ is unavoidable in $\K$.
\end{observation}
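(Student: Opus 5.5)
The statement to prove is Observation~\ref{obs:symmetry}. The plan is to swap the two colours. Recolouring is a bijection on the set of red/blue edge-colourings of any fixed graph $G \in \K$, so the two Ramsey properties become identical.

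Suppose $\{H, F\}$ is unavoidable in $\K$, and let $N$ witness this as in Definition~\ref{def:umpp}. Take any $G \in \K$ with $|G| \geq N$ and any red/blue edge-colouring $c$ of $G$. Let $\bar c$ be the colouring obtained by interchanging red and blue on every edge. Note that $\bar c$ is again a red/blue edge-colouring of the same graph $G$. This is the only point where the class matters, and it causes no difficulty: membership of $G$ in $\KMOP$ or $\KMP$ depends only on $G$, not on its colouring.

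Since $|G| \geq N$, the colouring $\bar c$ contains a red copy of $H$ or a blue copy of $F$. Under $c$, that red copy of $H$ is blue, and that blue copy of $F$ is red. Hence $c$ contains a red copy of $F$ or a blue copy of $H$. As $G$ and $c$ were arbitrary, the same $N$ shows that $\{F, H\}$ is unavoidable.

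The converse follows by applying the same argument with the roles of $H$ and $F$ exchanged. As a by-product, the same $N$ works in both directions, so $r_{\K}(H,F) = r_{\K}(F,H)$. There is no real obstacle here; the only thing to check is that recolouring does not leave the class, and it does not, since the graph itself is untouched.
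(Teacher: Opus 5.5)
Your proof is correct and matches the paper's approach. The paper states this as an observation without proof, and the colour-interchange argument you give is the one it relies on implicitly (``flipping the colours'' in the proofs of Theorems~\ref{thm:mop_p4_matching} and~\ref{thm:mp_p5}); your remark that the same witness $N$ gives $r_{\K}(H,F)=r_{\K}(F,H)$ is also correct.
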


The following observation establishes that it suffices to consider pairs $\{H, F\}$ where every component has order at least $2$. Henceforth we will assume that all our graphs do not have isolated vertices.

\begin{observation}[Isolated vertices] \label{obs:mop_isolated_vertices}
	Let $\K \in \{\KMOP, \KMP\}$. Let $H, F$ be graphs and let $s, t$ be non-negative integers. Then, $\{H, F\}$ is unavoidable in $\K$ if, and only if, $\{H \cup t K_1, F \cup s K_1\}$ is unavoidable in $\K$.
\end{observation}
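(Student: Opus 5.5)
The statement to prove is Observation~\ref{obs:mop_isolated_vertices}. The natural approach is to prove both directions directly from Definition~\ref{def:umpp}, adjusting the threshold $N$ by the number of added isolated vertices.

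\emph{The easy direction.} Suppose $\{H \cup tK_1, F \cup sK_1\}$ is unavoidable in $\K$. Since $H$ is a subgraph of $H \cup tK_1$ and $F$ is a subgraph of $F \cup sK_1$, Observation~\ref{obs:mop_monotonicity}(ii) immediately gives that $\{H,F\}$ is unavoidable in $\K$.

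\emph{The converse.} Suppose $\{H,F\}$ is unavoidable in $\K$, witnessed by some $N$. Set
\[
N' = \max\{N,\ |H|+t,\ |F|+s\}.
\]
Let $G \in \K$ have order at least $N'$, and fix any red/blue edge-colouring of $G$.
\begin{itemize}
\item Since $|G| \geq N$, the colouring contains a red copy of $H$ or a blue copy of $F$; say a red copy $H'$ of $H$ (the blue case is symmetric).
\item Because $|G| \geq |H| + t$, at least $t$ vertices of $G$ lie outside $V(H')$.
\item Choosing any $t$ of these vertices and adding them as isolated vertices extends $H'$ to a copy of $H \cup tK_1$. This copy is red, since its edge set is exactly $E(H')$, which is red. (Isolated vertices of the pattern need not be isolated in $G$; a copy only has to be a subgraph.)
\item Similarly, a blue copy of $F$ extends to a blue copy of $F \cup sK_1$ because $|G| \geq |F|+s$.
\end{itemize}
Hence every $G \in \K$ with $|G| \ge N'$ yields a red $H \cup tK_1$ or a blue $F \cup sK_1$, so the pair is unavoidable.

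There is no real obstacle. The only point needing care is that the threshold must be raised to guarantee enough spare vertices, which the choice of $N'$ handles.
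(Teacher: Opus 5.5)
Your argument is correct and matches the paper, which states this as an observation without proof. The one direction follows from monotonicity. For the other, raising the threshold to $\max\{N, |H|+t, |F|+s\}$ supplies enough spare vertices to pad a red $H$ or blue $F$ into a red $H \cup tK_1$ or blue $F \cup sK_1$. You were also right to flag that copies are (not necessarily induced) subgraphs, since that is what makes the padding harmless.
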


The most `primitive' pairs are of the form $\{H, K_2\}$, which are unavoidable in $\K$ if, and only if, there exists an integer $N(H) \geq |H|$ such that every graph $G$ in $\K$ with $|G| \geq N(H)$ has a subgraph isomorphic to $H$ (noting that every red/blue edge-colouring of $G$ is either monochromatic using red or has at least one blue edge).

\begin{observation}[Primitive $H$] \label{obs:H_K2}
 Let $\K \in \{\KMOP, \KMP\}$ and let $H$ be a graph. Then, $\{H, K_2\}$ is unavoidable in $\K$ if, and only if, there exists an integer $N(H) \geq |H|$ such that every graph $G$ in $\K$ with $|G| \geq N(H)$ has a subgraph isomorphic to $H$.
\end{observation}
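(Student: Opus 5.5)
The statement is a direct unpacking of Definition~\ref{def:umpp} with $F = K_2$, so I will prove each direction separately. The key observation is that a blue copy of $K_2$ is the same thing as a single blue edge. Hence a colouring of $G$ avoids a blue $K_2$ exactly when every edge is red.

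For the forward direction, suppose $\{H, K_2\}$ is unavoidable in $\K$. Then there is an $N$ such that every $G \in \K$ with $|G| \geq N$ has, under every red/blue edge-colouring, a red $H$ or a blue $K_2$. I apply this to the all-red colouring of such a $G$. There is no blue edge, so there must be a red copy of $H$, which in particular is a subgraph of $G$ isomorphic to $H$. I then set $N(H) = \max\{N, |H|\}$. This ensures $N(H) \geq |H|$ as the statement requires, and every $G \in \K$ with $|G| \geq N(H)$ still satisfies $|G| \geq N$, so it contains $H$.

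For the reverse direction, suppose such an $N(H)$ exists. Let $G \in \K$ with $|G| \geq N(H)$, and take any red/blue edge-colouring of $G$. If some edge is blue, that edge is a blue $K_2$. Otherwise every edge of $G$ is red, and the copy of $H$ in $G$ guaranteed by hypothesis is a red copy of $H$. Either way the required monochromatic structure appears, so $N = N(H)$ witnesses unavoidability.

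Neither direction presents a genuine obstacle; both are immediate from the definition. The only points needing care are the normalisation $N(H) \geq |H|$ in the forward direction, handled by the maximum above, and the remark that the argument uses no property of $\K$ beyond Definition~\ref{def:umpp}. It therefore holds verbatim for both $\KMOP$ and $\KMP$.
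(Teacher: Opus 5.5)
Your proof is correct and follows the paper's argument. The paper justifies the observation in one line: every red/blue colouring of $G$ is either entirely red, which forces $H$ as a subgraph, or has a blue edge, which is a blue $K_2$. Your extra normalisation $N(H)=\max\{N,|H|\}$ is a harmless detail the paper leaves implicit.
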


Lastly, we present the following technical result, which we term as the `rim-trick', that will enable us to construct unavoidable pairs from already known unavoidable pairs. First, we require the following lemma on paths.

\begin{lemma}\label{lem:interval-lemma}
 Let $k, n$ be two integers such that $n \geq k \geq 1$. Every red/blue edge-colouring of the path $P_{n}$ either has a blue $P_{k}$ or at least $\lfloor (n-1)/(k-1) \rfloor$ red edges.
\end{lemma}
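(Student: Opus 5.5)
The plan is a direct pigeonhole argument on consecutive edges of the path. We first dispose of the degenerate case $k = 1$. Since $n \geq 1$, the path $P_n$ has at least one vertex, and a single vertex is trivially a (blue) copy of $P_1$. So the first alternative always holds, and the expression $\lfloor (n-1)/(k-1) \rfloor$ never needs to be interpreted. From now on we assume $k \geq 2$.

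Write the path as $v_1 v_2 \cdots v_n$, with edges $e_i = v_i v_{i+1}$ for $1 \leq i \leq n-1$. Suppose the colouring has no blue $P_k$; the aim is to produce at least $\lfloor (n-1)/(k-1) \rfloor$ red edges. The key observation is that any $k-1$ consecutive edges $e_j, e_{j+1}, \dots, e_{j+k-2}$ form a copy of $P_k$, namely $v_j v_{j+1} \cdots v_{j+k-1}$. Hence every window of $k-1$ consecutive edges must contain at least one red edge.

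Set $m = \lfloor (n-1)/(k-1) \rfloor$, and consider the blocks
\[
B_\ell = \{ e_{(\ell-1)(k-1)+1}, \dots, e_{\ell(k-1)} \}, \qquad 1 \leq \ell \leq m .
\]
Since $m(k-1) \leq n-1$, all of these edges exist. The blocks are pairwise disjoint, and each consists of $k-1$ consecutive edges. By the observation above, each $B_\ell$ contains a red edge. Disjointness then gives at least $m$ distinct red edges, which proves the lemma.

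There is no real obstacle here. The only points needing care are the indexing, namely checking that the $m$ blocks fit inside the $n-1$ edges, and the separate treatment of $k=1$ to avoid division by zero.
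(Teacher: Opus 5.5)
Your proposal is correct and is essentially the paper's argument: split the edges into disjoint blocks of $k-1$ consecutive edges, and note that each block must contain a red edge. Your version is slightly tidier, since you use only the $\lfloor (n-1)/(k-1) \rfloor$ complete blocks (the paper's indexing runs to a possibly incomplete last block) and you treat the degenerate case $k=1$ separately.
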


\begin{proof}
 Suppose the path $P_{n}$ has vertices $v_0, \dots, v_{n-1}$ and edges $e_i \coloneqq \{v_i, v_{i+1}\}$ for all $0 \leq i < n-1$. 
 Suppose we have a red/blue edge-colouring of $P_{n}$, such that there is no blue $P_{k}$. Then, the collections of edges $\{e_0, \dots, e_{k-2}\}, \{e_{(k-1)},\dots, e_{2(k-1)-1} \}, \dots \{\allowbreak e_{(\lceil (n-1)/(k-1) \rceil -1)(k-1)}, \dots ,e_{(\lceil (n-1)/(k-1) \rceil)(k-1) -1} \}$ each have at least one red edge, as otherwise we would have a blue $P_{k}$. Therefore we must have at least $\lfloor (n-1)/(k-1) \rfloor$ red edges in our colouring, as desired.
\end{proof}

\newcommand{\rimbd}{C}
\begin{theorem}[Rim-trick]\label{thm:rim-trick-cycle}
 Let $H$ be a graph and $F$ a linear forest. If $G$ is a graph in which $\{H, F\}$ is unavoidable, and contains a cycle of length $\rimbd \geq (2t-1+|H|)(|F|-1) +2|H|$, then $\{ H\cup tK_2, F\}$ is unavoidable in $G$.
\end{theorem}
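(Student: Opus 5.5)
The plan is to fix an arbitrary red/blue edge-colouring of $G$ with no blue copy of $F$ and to exhibit a red copy of $H \cup tK_2$. Since $\{H,F\}$ is unavoidable in $G$, the colouring contains a red copy of $H$; let $S$ be its vertex set, so $|S| = |H|$. It then remains to find $t$ pairwise disjoint red edges avoiding $S$. These will be found on the long cycle. I assume $|F| \geq 2$, since otherwise a blue $F$ trivially exists and there is nothing to prove.

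\textbf{Step 1 (no long blue path).} Because $F$ is a linear forest on $|F|$ vertices, it is a subgraph of $P_{|F|}$: concatenate its path components. Hence the colouring has no blue $P_{|F|}$.

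\textbf{Step 2 (cut the cycle at $S$).} Let $\rimbd$ be the cycle of length $C$. Deleting the at most $|H|$ vertices of $S \cap V(\rimbd)$ splits $\rimbd$ into at most $|H|$ vertex-disjoint paths (arcs) $Q_1, \dots, Q_m$, with $m \leq |H|$. Their orders $n_i$ satisfy $\sum n_i \geq C - |H|$. If $S$ misses the cycle, I delete one arbitrary vertex so that a path remains; the estimate below is unaffected.

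\textbf{Step 3 (count red edges).} Apply Lemma~\ref{lem:interval-lemma} with $k = |F|$ to each arc $Q_i$ that has at least $|F|$ vertices. Shorter arcs contribute $0$, which is consistent with the bound below. Each $Q_i$ therefore has at least $\lfloor (n_i - 1)/(|F|-1) \rfloor \geq (n_i - 1)/(|F|-1) - 1$ red edges. Summing over the arcs gives at least
\[
\frac{C - 2|H|}{|F|-1} - |H| \;\geq\; 2t - 1
\]
red edges in total, where the inequality uses the hypothesis $C \geq (2t-1+|H|)(|F|-1) + 2|H|$.

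\textbf{Step 4 (extract a matching).} The red edges lie on vertex-disjoint paths disjoint from $S$, so they form a linear forest. A linear forest with $r$ edges contains a matching of size at least $\lceil r/2 \rceil$: take alternate edges along each component. With $r \geq 2t-1$ this yields $t$ disjoint red edges outside $S$, and together with the red $H$ on $S$ they form a red $H \cup tK_2$.

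\textbf{Main obstacle.} The delicate point is the bookkeeping of the floor losses, namely one lost edge per arc and one lost vertex per arc endpoint, so that the final count reaches exactly $2t-1$. This is what dictates the $2|H|$ term and the $|H|$ inside the factor in the hypothesis on $C$. I would double-check the degenerate cases, such as short arcs and $S$ disjoint from the cycle, against that inequality.
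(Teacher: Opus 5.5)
Your proposal is correct and follows the paper's proof essentially step for step: you delete the red copy of $H$, split the long cycle into at most $|H|$ arcs, and use Lemma~\ref{lem:interval-lemma} with $k=|F|$ (valid since $F \subseteq P_{|F|}$) to force at least $2t-1$ red arc edges. You then take alternate red edges to get $t$ disjoint ones, which is exactly the paper's appeal to a proper $2$-edge-colouring of a linear forest; the only differences are that you argue directly rather than via the paper's case split on $r \geq 2t-1$, and you handle short arcs and a red $H$ missing the cycle slightly more carefully than the paper does.
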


\begin{proof}
 Consider a red/blue edge-colouring of $G$, and suppose that this colouring does not contain a blue $F$. Because $\{H, F\}$ is unavoidable, this means $G$ contains a red $H$. Consider the graph $G^*$ obtained from $G$ by deleting the vertices of this copy of $H$. In this $G^*$, the cycle of length $\rimbd$ is split into $p$ paths, $R_1,\dots, R_{p}$ where $p \leq |H|$ which in total have $N \geq \rimbd - |H|$ vertices. Let $r, b$ represent the number of red and blue edges in these paths respectively. If $r \geq 2t-1$, then since forests are Vizing Class 1 graphs, there exists a proper $2$-edge-colouring of the red edges of these paths, and therefore there is a matching of size $t$ amongst the red edges of these paths, and hence in $G$. Together with the red $H$, this gives us a red $H \cup tK_2$.

 Otherwise, it must be the case that $r < 2t-1$. 
 Since $r+b = N - p$, $b< N-p -2t +1$. 
 Since our colouring does not contain a blue $F$, it must be the case that none of the $p$ paths contain a path with $|F|$ vertices, as $F$ can be embedded within this path since it is a linear forest. Letting $n_i$ represent the number of vertices and $r_i$ the number of red edges in path $R_i$, by Lemma \ref{lem:interval-lemma} it must be the case that $r_i \geq \lfloor (n_i-1)/(|F|-1) \rfloor$. Therefore, 
 $r \geq \sum_{i=0}^{p} \lfloor (n_i-1)/(w-1) \rfloor >
 \sum_{i=0}^{p} (n_i-1)/(|F|-1) -1 \geq (N-p)/(|F|-1) -p \geq ( N- |H|)/(|F|-1) - |H| \geq ( C- 2|H|)/(|F|-1) - |H|$. 

 To obtain a contradiction, we require that $r \geq 2t-1$, and hence it suffices to show that $ ( \rimbd - 2|H|)/(|F|-1) - |H| \geq 2t-1$. Rearranging this equation we get $\rimbd \geq (2t-1+|H|)(|F|-1) +2|H|$ as in the hypothesis, therefore we obtain our desired result.
\end{proof}

\begin{remark}
 If instead of a cycle we have a path, we can also carry out this trick with the slightly worse bound of $C \geq (2t+|H|)(|F| - 1) +2|H|+1$, noting that when splitting the path we get at most $|H|+1$ paths.
\end{remark}
\section{(Un)avoidable monochromatic pairs in \texorpdfstring{$\KMOP$}{KMOP}}
\label{sec:mop}

Let $H$ and $F$ be graphs, possibly with $H = F$, and recall the definition of an \textit{unavoidable} pair $\{H, F\}$ in $\KMOP$ from Definition \ref{def:umpp}. Recall also that if $\{H,F\}$ is unavoidable, we write $\MOP{H}{F}$ for the smallest positive integer $N$ such that every red/blue edge-colouring of every maximal planar graph on at least $N$ vertices contains either a red copy of $H$ or a blue copy of $F$. We begin by summarising a number of observations. 

Before proceeding, we note the following facts about maximal outerplanar graphs. 

\begin{theorem}[\cite{Harary_1969} Theorem 11.9 and Corollary 11.9 (a)] \label{thm:mop_properties}
    Every maximal outerplanar graph with $n \geq 3$ vertices has $n - 2$ interior faces, $2n - 3$ edges, and at least two vertices of degree~$2$.
\end{theorem}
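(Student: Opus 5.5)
We prove the three claims of Theorem~\ref{thm:mop_properties} in order, starting from the structure of a maximal outerplanar graph $G$ on $n\ge 3$ vertices. Fix an outerplanar embedding with all vertices on the outer face. First we show that the boundary of the outer face is a Hamiltonian cycle $v_1v_2\cdots v_n$. If some consecutive pair $v_i,v_{i+1}$ along the outer boundary walk were non-adjacent, we could draw the edge $v_iv_{i+1}$ in the outer face without crossings, and every vertex would stay on the outer face. This contradicts maximality. The same argument shows $G$ is $2$-connected: a cut vertex would let us add an edge between two blocks in the outer face. So the outer boundary is a cycle through all $n$ vertices.

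Next we show that every interior face is a triangle. Suppose an interior face is bounded by a cycle of length at least $4$. Then a chord of that face can be drawn inside it, keeping all vertices on the outer face, again contradicting maximality. Hence the interior of the Hamiltonian cycle is triangulated by non-crossing chords.

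For the counts, we proceed by induction on $n$, or alternatively by Euler's formula. With Euler's formula, let $f$ be the number of interior faces. Counting edge–face incidences gives $3f + n = 2|E|$, since each interior face has $3$ edges, the outer face has $n$, and each edge borders exactly two faces. Euler's formula gives $n - |E| + (f+1) = 2$. Solving these simultaneously yields $f = n-2$ and $|E| = 2n-3$.

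For the degree-$2$ vertices, consider the weak dual $T$, whose vertices are the interior triangles, with two triangles adjacent when they share a chord.
\begin{itemize}
\item $T$ is a tree. It is connected, and it is acyclic because each chord separates the disk into two parts.
\item For $n=3$ all three vertices have degree $2$, so we may assume $n\ge 4$. Then $T$ has at least two vertices, and hence at least two leaves.
\item A leaf triangle has exactly one chord and two outer edges. Its apex $v$ is the vertex shared by those two outer edges.
\item The apex $v$ has no other incident edges: any chord at $v$ would bound a further triangle containing $v$ on that side, contradicting leafness. So $\deg v = 2$.
\item Distinct leaves give distinct apexes, because an apex lies in only one triangle.
\end{itemize}
This gives at least two vertices of degree $2$.

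The main obstacle is making the maximality arguments rigorous, namely that the outer face is a Hamiltonian cycle and that interior faces are triangles. The counting steps and the tree argument are routine once this structure is in place.
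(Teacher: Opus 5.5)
The paper gives no proof of this statement; it imports it by citation from Harary's textbook (Theorem 11.9 and Corollary 11.9(a)). Your argument is therefore a self-contained replacement rather than a variant of anything in the paper. Your route is the standard one, and it is correct:
\begin{enumerate}[(i)]
\item maximality forces a $2$-connected embedding whose outer face is a Hamiltonian cycle and whose interior faces are triangles;
\item Euler's formula together with the incidence count $3f+n=2|E|$ gives $f=n-2$ and $|E|=2n-3$;
\item the weak dual is a tree whose leaves supply degree-$2$ ``ears''.
\end{enumerate}
The citation buys brevity. Your proof buys something more: step (i) establishes the Hamiltonicity of maximal outerplanar graphs. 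The paper relies on this fact without proof every time it applies the rim-trick in $\KMOP$ (e.g.\ ``Since $G$ is Hamiltonian'' in Theorem~\ref{thm:mop_p4_matching}).

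A few steps need one more sentence each.

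\emph{Outer cycle.} Your opening sentence about ``non-adjacent consecutive'' boundary vertices is vacuous, since consecutive vertices of a boundary walk are adjacent by definition. The real content is connectivity plus the cut-vertex argument. There you should note that the outer walk passes through a cut vertex $v$ at least twice. At some passage it goes from a neighbour $u$ to a neighbour $w$ lying in different components of $G-v$. Hence $uw$ is a genuinely new edge, and $v$ remains on the outer face through its other occurrence.

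\emph{Triangle step.} You must check that the chord you add is not already an edge drawn elsewhere. For a face of length at least $4$, of two diagonals with interleaved endpoints at most one can exist outside the face, so the other one is new.

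\emph{Apex.} The right contradiction is crossing, not leafness. Since $C$ runs $a,v,b$, any chord $vz$ has $z$ strictly on the arc from $b$ to $a$ avoiding $v$. So $vz$ would cross the chord $ab$.

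\emph{Connectivity of $T$.} This is not actually needed, since a forest with at least one edge already has two leaves. In any case it follows from acyclicity, because $T$ has $n-2$ vertices and $n-3$ edges, one per chord.
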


\begin{corollary}[Induction on maximal outerplanar graphs]\label{cor:for-ind}
    If $G$ is a maximal outerplanar graph and $x$ is a vertex of degree $2$ in $G$, then $G - x$ is also maximal outerplanar.
\end{corollary}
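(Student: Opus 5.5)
The plan is to reduce the statement to the definition of maximal outerplanarity and to the structure of an outerplanar embedding. Let $G$ be maximal outerplanar and let $x$ be a vertex of degree $2$. If $|G| = 3$, then $G - x \simeq K_2$, which is maximal outerplanar (trivially, no edge can be added), so we may assume $|G| \geq 4$.

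First, $G - x$ is outerplanar, since outerplanarity is closed under taking subgraphs: deleting $x$ from an outerplanar embedding of $G$ leaves an outerplanar embedding of $G - x$. By Theorem \ref{thm:mop_properties}, $G$ has $2n-3$ edges, where $n = |G|$. Since $x$ has degree $2$, the graph $G - x$ has $n-1$ vertices and $2n - 5 = 2(n-1) - 3$ edges.

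It then suffices to show that an outerplanar graph on $m \geq 2$ vertices with $2m-3$ edges is maximal outerplanar. This holds because every outerplanar graph on $m \geq 2$ vertices has at most $2m-3$ edges. I would justify this standard fact by noting that any outerplanar graph extends to a maximal outerplanar graph on the same vertex set by adding edges, and Theorem \ref{thm:mop_properties} states that maximal ones have exactly $2m-3$ edges. Consequently, adding any edge to $G - x$ would give an outerplanar graph with $2m-2$ edges, which is impossible. Hence $G - x$ is maximal outerplanar.

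The only point needing care is this edge bound for general outerplanar graphs. One could instead argue directly: in the embedding, $x$ lies on the outer cycle with neighbours $u$ and $w$. The edge $uw$ must be present, since otherwise it could be added through the outer face. Removing $x$ therefore deletes the triangular face $xuw$, and the remaining interior faces stay triangles bounded by a Hamiltonian outer cycle, which again characterises maximal outerplanarity. I expect the edge-counting route to be the cleanest.
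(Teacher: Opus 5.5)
Your proposal is correct and follows essentially the same route as the paper: outerplanarity passes to $G - x$, the edge count drops to $2(n-1)-3$, and maximality follows because adding an edge would exceed the edge count of Theorem \ref{thm:mop_properties}. You spell out two points the paper leaves implicit: that every outerplanar graph extends to a maximal one on the same vertex set, which gives the $2m-3$ upper bound, and the degenerate case $n = 3$, where $G - x \simeq K_2$ falls outside the range $n \geq 3$ of that theorem.
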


\begin{proof}
    Let $G$ be a maximal outerplanar graph on $n$ vertices and let $x$ be a vertex of degree $2$ in $G$; note then that $n \geq 3$. Firstly, $G - x$ must be outerplanar, as removing a vertex preserves outerplanarity. Also, by Theorem \ref{thm:mop_properties}, $G$ has $2n - 3$ edges, which means $G - x$ must have $2(n - 1) - 3$ edges. Therefore $G - x$ must be maximal, as otherwise we can add more edges whilst remaining outerplanar, contradicting Theorem \ref{thm:mop_properties}. 
\end{proof}

\subsection{Constructions for families of avoidable pairs in \texorpdfstring{$\KMOP$}{KMOP}}

We outline the following constructions which we will use throughout this section, in order to demonstrate which pairs $\{H, F\}$ are avoidable in $\KMOP$.

\begin{figure}[H]
   \ctikzfig{figures/mop_fig1}
   \caption{The family of $2$-edge-coloured maximal outerplanar graphs $A^{(1)}_p$ on $p + 1$ vertices, $p \geq 1$.}
   \label{fig:mop_fig1}
\end{figure}

\begin{figure}[h!]
   \ctikzfig{figures/mop_fig5}
   \caption{The family of $2$-edge-coloured maximal outerplanar graphs $A^{(2)}_p$ on $2p + 1$ vertices, $p \geq 1$.}
   \label{fig:mop_fig2}
\end{figure}

\begin{figure}[h!]
   \ctikzfig{figures/mop_fig2}
   \caption{The family of $2$-edge-coloured maximal outerplanar graphs $A^{(3)}_p$ on $2p$ vertices, $p \geq 1$.}
   \label{fig:mop_fig3}
\end{figure}

\begin{figure}[h!]
   \ctikzfig{figures/mop_fig3}
   \caption{The family of $2$-edge-coloured maximal outerplanar graphs $A^{(4)}_p$ on $2p$ vertices, $p \geq 1$.}
   \label{fig:mop_fig4}
\end{figure}

\begin{figure}[h!]
   \ctikzfig{figures/mop_fig6}
   \caption{The family of $2$-edge-coloured maximal outerplanar graphs $A^{(5)}_p$ on $2p$ vertices, $p \geq 1$.}
   \label{fig:mop_fig5}
\end{figure}

\begin{figure}[h!]
   \ctikzfig{figures/mop_fig4}
   \caption{The family of $2$-edge-coloured maximal outerplanar graphs $A^{(6)}_p$ on $2p$ vertices, $p \geq 1$.}
   \label{fig:mop_fig6}
\end{figure}

\subsection{The diagonal case: unavoidable \texorpdfstring{$\{H, H\}$}{(H,H)} in \texorpdfstring{$\KMOP$}{KMOP}}

In this section we will prove the following theorem, which gives the complete characterisation of which pairs $\{H, H\}$ are unavoidable in $\KMOP$. 

\begin{restatable}{restatabletheorem}{MOPDiagonal} \label{thm:mop_diagonal}
    $\{H, H\}$ is unavoidable in $\KMOP$ if, and only if, $H$ is isomorphic to one of $t K_2$, $P_3 \cup t K_2$ or $P_4 \cup t K_2$ for some integer $t \geq 0$.
\end{restatable}

We will first consider the case when $H$ is connected. We will require the following two lemmas.

\begin{lemma} \label{lem:mop_f14}
    Every maximal outerplanar graph on $n \geq 5$ vertices has a vertex-induced subgraph isomorphic to the fan $F_{1, 4}$. 
\end{lemma}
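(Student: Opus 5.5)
Every maximal outerplanar graph with $n \geq 3$ vertices has a unique Hamiltonian cycle (its outer face boundary), and its weak dual, whose vertices are the $n-2 \geq 3$ inner triangles and whose edges join triangles sharing a chord, is a tree $T$ of maximum degree at most $3$. An induced $F_{1,4}$ in $G$ consists of a centre $c$ together with a path $v_1v_2v_3v_4$ of neighbours of $c$, such that $c v_i v_{i+1}$ are three faces forming a path in $T$, and $v_1v_3, v_1v_4, v_2v_4 \notin E(G)$. Conversely, if three consecutive faces $cv_1v_2, cv_2v_3, cv_3v_4$ all share the vertex $c$, then the subgraph induced on $\{c,v_1,\dots,v_4\}$ is exactly $F_{1,4}$. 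Indeed, by outerplanarity the only possible extra edges are chords among the $v_i$. Such a chord, say $v_1v_3$, would cross the chord $cv_2$ in the outerplanar embedding, because $v_2$ lies strictly between $v_1$ and $v_3$ on the side of $cv_2$ opposite $c$. The same argument rules out $v_2v_4$ and $v_1v_4$. So the task reduces to the following claim: if $n \geq 5$, there is a vertex $c$ lying on at least three inner faces. This suffices because the faces incident to a fixed vertex $c$ always form a path $cu_1u_2,\dots,cu_{k-1}u_k$ in $T$, where $u_1,\dots,u_k$ are the neighbours of $c$ in the cyclic order of the embedding. Three consecutive such faces then give the fan.

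To prove the claim, I will argue by contradiction. Suppose every vertex lies on at most two inner faces, so every vertex has degree at most $3$; a vertex on $j$ faces has degree $j+1$. Counting incidences, $\sum_v (\deg v - 1) = 3(n-2)$, since each of the $n-2$ faces has three vertices. This gives $\sum_v \deg v = 4n-6$, which agrees with the $2n-3$ edges from Theorem~\ref{thm:mop_properties}, so the count alone gives no contradiction. I therefore need structure. By Theorem~\ref{thm:mop_properties}, $G$ has a degree-$2$ vertex $x$, with neighbours $y,z$ and $yz$ a chord (since $n\ge 4$). The face $xyz$ is a leaf of $T$, and its neighbour face $yzw$ exists. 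Because $n \geq 5$, the face $yzw$ is not the only other face, so some further face contains $yw$ or $zw$, say $yw$. Then $y$ lies on the three faces $xyz$, $yzw$ and the face across $yw$. This contradicts the assumption and proves the claim.

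The main obstacle is the case analysis in this final step. I need to verify that when $n \geq 5$ the face $yzw$ cannot be a leaf on both sides. If both $yw$ and $zw$ were outer edges, the graph would be exactly $\{x,y,z,w\}$ with $n=4$. Hence one of $yw, zw$ is a chord, and the face across it supplies the third face at $y$ or $z$. A minor point to check is that $F_{1,4}$ is meant as $K_1+P_4$, which matches the notation $F_{m,n}=mK_1+P_n$.
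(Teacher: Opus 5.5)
Your proof is correct, but it takes a different route from the paper's. The paper argues by induction on $n$. Its base case is the fact that the only maximal outerplanar graph on five vertices is $F_{1,4}$ itself. For $n \ge 6$ it deletes a degree-$2$ vertex $v$ (Corollary~\ref{cor:for-ind}), and an induced $F_{1,4}$ in the maximal outerplanar graph $G-v$ is automatically induced in $G$.

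You avoid induction and exhibit the centre explicitly. It is a neighbour $y$ of an ear $x$, and it lies on the three consecutive faces $xyz$, $yzw$ and the face across the chord $yw$. You then show that any vertex lying on three consecutive inner faces is the centre of an induced $F_{1,4}$.

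The paper's argument is shorter, but it rests on the uniqueness of the five-vertex maximal outerplanar graph, which it states without proof (easy, since every triangulation of a pentagon is a fan). It also says nothing about where the fan sits. Your argument is self-contained and proves more: every vertex of degree at least $4$, together with any four consecutive neighbours in the embedding, induces $F_{1,4}$.

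Two simplifications are available to you:
\begin{itemize}
\item The proof by contradiction and the incidence count $\sum_v(\deg v-1)=3(n-2)$ can be dropped, because the ear argument produces $y$ directly.
\item Inducedness follows without the crossing argument. The seven edges $cv_i$ and $v_iv_{i+1}$ are already present, and an outerplanar graph on five vertices has at most $2\cdot 5-3=7$ edges.
\end{itemize}
If you keep the crossing argument, state its reason precisely. The edge $cv_2$ is a chord, since it is shared by the faces $cv_1v_2$ and $cv_2v_3$, and it separates $v_1$ from $v_3$ and $v_4$. Likewise $cv_3$ separates $v_2$ from $v_4$.
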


\begin{proof}
    We begin by noting that there is only one maximal outerplanar graph on five vertices, namely the fan $F_{1, 4}$. We proceed by induction on the number of vertices $n \geq 6$. Suppose the statement holds for all maximal outerplanar graphs on $n - 1$ vertices. Let $G$ be a maximal outerplanar graph on $n$ vertices. By Theorem \ref{thm:mop_properties} and Corollary \ref{cor:for-ind}, there exists a vertex $v$ of degree $2$  such that $G - v$ is a maximal outerplanar graph on $n - 1$ vertices. By the inductive hypothesis, $G - v$ has a vertex-induced subgraph isomorphic to $F_{1, 4}$. The result follows.
\end{proof}

\begin{lemma} \label{lem:mop_p4}
    $\{P_4, P_4\}$ is unavoidable in $\KMOP$, with $\MOP{P_4}{P_4} = 5$.
\end{lemma}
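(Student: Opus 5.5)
Two things need to be shown: the upper bound, that every red/blue edge-colouring of every maximal outerplanar graph on $n \geq 5$ vertices has a monochromatic $P_4$; and the lower bound, that some maximal outerplanar graph on $4$ vertices has a colouring with no monochromatic $P_4$.

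\textbf{Reduction to the fan.} By Lemma \ref{lem:mop_f14}, every maximal outerplanar graph on $n \geq 5$ vertices contains $F_{1,4}$ as a subgraph. By Observation \ref{obs:mop_monotonicity}, applied to the graph itself, it therefore suffices to show that every red/blue edge-colouring of $F_{1,4}$ contains a monochromatic $P_4$.

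\textbf{The fan case.} Write $F_{1,4}$ with hub $c$ and rim path $v_1v_2v_3v_4$, so it has $4$ spokes and $3$ rim edges. Suppose a colouring has no monochromatic $P_4$.
\begin{itemize}
\item If the rim path $v_1v_2v_3v_4$ is monochromatic, it is itself a monochromatic $P_4$. So the rim is not monochromatic.
\item By pigeonhole, some colour, say red, appears on at least two spokes, $cv_i$ and $cv_j$.
\item Consider a red rim edge $v_kv_{k+1}$ with one endpoint $v_i$, a red spoke endpoint, and the other endpoint different from $v_j$. Then $v_j\,c\,v_i\,v_{k\pm1}$ is a red $P_4$.
\item The remaining work is a short case analysis on the spoke colour pattern: $4{:}0$, $3{:}1$, or $2{:}2$ split, with positions up to the reflection $v_i \leftrightarrow v_{5-i}$. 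In each case the rim edges must avoid extending a two-spoke red path, and symmetrically a two-spoke blue path. One then checks that the rim edges are forced into a monochromatic path, or into an edge that extends a spoke pair.
\end{itemize}
Two sample cases show the pattern:
\begin{itemize}
\item \emph{All spokes red.} Every rim edge must be blue, since a red rim edge $v_kv_{k+1}$ combined with another spoke gives a red $P_4$. Then the whole rim is a blue $P_4$.
\item \emph{Three red spokes} $cv_1, cv_2, cv_3$ \emph{and} $cv_4$ \emph{blue.} The rim edges $v_1v_2$ and $v_2v_3$ must be blue. This gives the blue path $v_1v_2v_3$, so $v_3v_4$ must be red. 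But then $v_4v_3\,c\,v_1$ is a red $P_4$.
\end{itemize}

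The main obstacle is organising the $2{:}2$ spoke cases without omission. If the case analysis becomes too messy, I will fall back on the exhaustive computer check of all $2^7$ colourings, described in Appendix \ref{sec:code}.

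\textbf{Lower bound.} For $\MOP{P_4}{P_4} > 4$, the maximal outerplanar graph on $4$ vertices is $K_4$ minus an edge, which is unique. Label it with outer cycle $abcd$ and chord $ac$. Colour $ab$, $bc$, $ac$ red and $cd$, $da$ blue. The red edges form a triangle, which contains a $P_3$ but not a $P_4$. The blue edges form a $P_3$ only. Hence there is no monochromatic $P_4$, so $\MOP{P_4}{P_4} = 5$.
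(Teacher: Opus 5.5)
Your proposal is correct and follows the paper's proof. You reduce to $F_{1,4}$ via Lemma \ref{lem:mop_f14}, using a plain restriction of the colouring rather than Observation \ref{obs:mop_monotonicity}, which concerns subgraphs of $H$ and $F$ rather than of the host graph. You then check $F_{1,4}$ directly (the paper only asserts this is easy), and exhibit a bad colouring of $K_4-e$; your red triangle plus blue $P_3$ works as well as the paper's red $K_{1,3}$ plus blue $P_3$.

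The cases you left open each close in a line, so no computer fallback is needed. For example, with red spokes $cv_1,cv_2$ and blue spokes $cv_3,cv_4$, the edge $v_2v_3$ yields either the red path $v_1cv_2v_3$ or the blue path $v_2v_3cv_4$.
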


\begin{proof}
    The only maximal outerplanar graph on $4$ vertices is the fan $F_{1, 3}$, which admits a red/blue edge-colouring such that there is no monochromatic $P_4$. Namely, consider the colouring where we fix a vertex $v$ of degree $3$ and colour all edges incident to $v$ red, whilst colouring the remaining edges blue. Then such a colouring has no monochromatic $P_4$.
    
    Hence $\MOP{P_4}{P_4} \geq 5$. We next show equality. There is only one maximal outerplanar graph with $5$ vertices, the fan $F_{1, 4}$. It is easy to check that for all red/blue edge-colourings of this graph, $P_4$ is embeddable in either the blue or red subgraphs. The fact that $\{P_4, P_4\}$ is unavoidable for any maximal outerplanar graph on $n \geq 5$ vertices then follows by Lemma \ref{lem:mop_f14}.
\end{proof}

\begin{proposition} \label{prop:mop_diagonal_connected}
    If $H$ is a connected graph, then $\{H, H\}$ is unavoidable 
    $\KMOP$ if, and only if, $H$ is isomorphic to one of $K_2, P_3$ or $P_4$.
\end{proposition}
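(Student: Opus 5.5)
The ``if'' direction is quick. By Lemma \ref{lem:mop_p4}, $\{P_4,P_4\}$ is unavoidable in $\KMOP$. Since $K_2$ and $P_3$ are subgraphs of $P_4$, Observation \ref{obs:mop_monotonicity}(ii) shows that $\{K_2,K_2\}$ and $\{P_3,P_3\}$ are unavoidable as well.

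For the ``only if'' direction, let $H$ be connected and not isomorphic to $K_2$, $P_3$ or $P_4$. If $H$ has a single vertex, then $H\simeq K_1$ and the statement is degenerate; since isolated vertices are excluded by Observation \ref{obs:mop_isolated_vertices}, assume $|H|\ge 2$. A connected graph on at least two vertices that is not one of these three paths contains one of the following as a subgraph:
\begin{enumerate}[(a)]
\item the star $K_{1,3}$, if $\Delta(H)\ge 3$;
\item the triangle $K_3$, if $\Delta(H)\le 2$ and $H$ is a cycle $C_k$ with $k=3$;
\item the cycle $C_4$, if $\Delta(H)\le 2$ and $H\simeq C_4$;
\item the path $P_5$, if $\Delta(H)\le 2$ and $H$ is a path on at least $5$ vertices or a cycle on at least $5$ vertices.
\end{enumerate}
By Observation \ref{obs:mop_monotonicity}(i), it suffices to show that $\{K_{1,3},K_{1,3}\}$, $\{K_3,K_3\}$, $\{C_4,C_4\}$ and $\{P_5,P_5\}$ are each avoidable in $\KMOP$. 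For each pair we need an infinite family of maximal outerplanar graphs with a red/blue colouring that has no monochromatic copy.

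The triangle is handled by the fan $F_{1,n}$. Colour the spokes red and the rim path blue. Then the red graph is a star and the blue graph is a path, so neither contains a cycle. This handles $K_3$, $C_4$ and every other cycle at once.

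For $K_{1,3}$ we need each colour class to have maximum degree at most $2$. Edge counts do not rule this out: $2n-3\le 2n$. Take the ``zigzag'' triangulated strip, a maximal outerplanar graph on vertices $u_1,\dots,u_p,w_1,\dots,w_p$ with all degrees at most $4$. Decompose it into two linear forests, for example the two boundary paths in one colour and the zigzag diagonals in the other, and check the degrees at the ends. I expect one of the families $A^{(i)}_p$ in the figures to be exactly this colouring.

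For $P_5$ we need a colouring whose monochromatic components all have at most four vertices and contain no $P_5$. Again use a strip-like MOP, colouring the edges in a periodic pattern so that each colour class is a disjoint union of short paths and small stars such as $K_{1,3}$ or $P_4$.

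The main obstacle is the $P_5$ construction. A maximal outerplanar graph has $2n-3$ edges, so the two colour classes together must absorb almost $2n$ edges while every component of each class is $P_5$-free. $P_5$-free components are stars, triangles with pendants, and similar small graphs, so each colour class needs many edges per component. The natural first attempt is a fan-like structure with several hubs, where each colour class is a disjoint union of stars centred at alternate hub vertices. Concretely, take the strip with $u_i w_i$ and $u_i w_{i+1}$ as diagonals, and colour the edges at even-indexed $u_i$ red and the edges at odd-indexed $u_i$ blue as stars. I would verify that every monochromatic component is a star or a star plus one edge, which is $P_5$-free, and check this against the paper's families $A^{(i)}_p$.
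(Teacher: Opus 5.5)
Your reduction to the four graphs $K_{1,3}$, $K_3$, $C_4$, $P_5$ is correct. The converse via Lemma~\ref{lem:mop_p4} and monotonicity is exactly the paper's. Your first two constructions are valid:
\begin{itemize}
\item In the fan, the two colour classes are a star and a path, so neither contains a cycle.
\item In the zigzag strip with edges $u_iu_{i+1}$, $w_iw_{i+1}$, $u_iw_i$, $u_iw_{i+1}$, colour the paths $u_1\cdots u_p$ and $w_1\cdots w_p$ red. The remaining edges then form the blue Hamiltonian path $w_1u_1w_2u_2\cdots w_pu_p$, so both classes have maximum degree at most $2$.
\end{itemize}
These play the roles of the paper's $A^{(1)}_p$ and $A^{(3)}_p$.

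The gap is the $P_5$ family. You leave it ``to be verified'', and as stated your rule does not define a colouring. The edge $u_iu_{i+1}$ lies at one even- and one odd-indexed $u$-vertex, and $w_iw_{i+1}$ lies at no $u$-vertex. The outcome depends on exactly these edges:
\begin{itemize}
\item If $w_iw_{i+1}$ receives the colour of $u_{i+1}$, then $u_{i-1}w_iw_{i+1}u_{i+1}w_{i+2}$ is a monochromatic $P_5$.
\item If $u_iu_{i+1}$ is assigned to $u_i$ but $u_{i+1}u_{i+2}$ to $u_{i+2}$, then $w_iu_iu_{i+1}u_{i+2}w_{i+2}$ is a monochromatic $P_5$.
\end{itemize}
Your earlier suggestion of colour classes made of short paths and small stars is ruled out by your own count. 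A forest whose components have order at most $4$ has at most $3n/4$ edges, and $3n/2<2n-3$ for $n>6$. So the components must contain cycles.

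A correct completion is as follows. Let $c(i)$ be red for even $i$ and blue for odd $i$. For $1\le i\le p-1$, give $u_iw_i$, $u_iw_{i+1}$, $u_iu_{i+1}$ and $w_iw_{i+1}$ the colour $c(i)$, and give $u_pw_p$ the colour $c(p)$. The colour-$c(i)$ edges on $\{u_i,u_{i+1},w_i,w_{i+1}\}$ then form the triangle $u_iw_iw_{i+1}$ with pendant edge $u_iu_{i+1}$. Within each colour these blocks are pairwise vertex-disjoint, so every monochromatic component has at most four vertices. This excludes $P_5$, and indeed every connected graph on five or more vertices, which is how the paper uses $A^{(6)}_p$. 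With this colouring written out and checked, your argument is complete and follows the paper's route.
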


\begin{proof}
    Suppose that $\{H, H\}$ is unavoidable in $\KMOP$. By the construction in Figure \ref{fig:mop_fig6}, $H$ must have at most $4$ vertices. Furthermore, by the construction in Figure \ref{fig:mop_fig1}, $H$ cannot contain any cycles. Also, by the construction in Figure \ref{fig:mop_fig3}, $H$ cannot be the star $K_{1, 3}$. Hence $H$ could only possibly be isomorphic to one of $K_2$, $P_3$ or $P_4$. Trivially, $\{K_2, K_2\}$ are unavoidable in $\KMOP$. By Lemma \ref{lem:mop_p4}, $\{P_4, P_4\}$ is unavoidable in $\KMOP$, and by monotonicity it follows that $\{P_3, P_3\}$ is also unavoidable in $\KMOP$. The result follows. 
\end{proof}

We next strengthen the above result using the `rim-trick' (Theorem \ref{thm:rim-trick-cycle}), in order to obtain results for when $H$ is not necessarily connected. 

\begin{theorem} \label{thm:mop_p4_matching}   
    For every integer $t \geq 0$, $\{P_4 \cup t K_2, P_4 \cup t K_2\}$ is unavoidable in $\KMOP$ and $\MOP{P_4 \cup t K_2}{ P_4 \cup t K_2} \leq 4t^2 +12t + 17$.
\end{theorem}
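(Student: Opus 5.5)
The plan is to apply the rim-trick (Theorem~\ref{thm:rim-trick-cycle}) twice, once to add $tK_2$ to each side of the pair. The starting point is Lemma~\ref{lem:mop_p4}. The cycle the rim-trick needs comes for free: every maximal outerplanar graph $G$ on $n \geq 3$ vertices has a Hamiltonian cycle, namely the boundary of its outer face. So $G$ contains a cycle of length $C = n$, and the rim-trick's length condition becomes a condition on $n$. For $t = 0$ the statement is exactly Lemma~\ref{lem:mop_p4}, since $5 \leq 17$. Assume from now on that $t \geq 1$.

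\textbf{First application.} Let $G$ be maximal outerplanar with $n \geq 5$, so that $\{P_4, P_4\}$ is unavoidable in $G$ by Lemma~\ref{lem:mop_p4}. Take $H = P_4$ and $F = P_4$; the latter is a linear forest. The rim-trick shows that $\{P_4 \cup tK_2, P_4\}$ is unavoidable in $G$ provided
\[
n \geq (2t-1+4)(4-1) + 2\cdot 4 = 6t+17 .
\]
By Observation~\ref{obs:symmetry}, applied graph-by-graph (swapping the two colours), $\{P_4, P_4 \cup tK_2\}$ is then also unavoidable in every such $G$.

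\textbf{Second application.} Now take $H = P_4$ and $F = P_4 \cup tK_2$. This $F$ is again a linear forest, with $|F| = 2t+4$. Applying the rim-trick to $G$ shows that $\{P_4 \cup tK_2, P_4 \cup tK_2\}$ is unavoidable in $G$ whenever
\[
n \geq (2t-1+4)(2t+4-1) + 2\cdot 4 = (2t+3)^2 + 8 = 4t^2+12t+17 .
\]
This application also needs the hypothesis from the first step, namely $n \geq 6t+17$. That holds automatically, since $4t^2+12t+17 \geq 6t+17$ for all $t \geq 0$. Hence every maximal outerplanar graph on at least $4t^2+12t+17$ vertices, under every red/blue edge-colouring, contains a monochromatic $P_4 \cup tK_2$. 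This gives unavoidability and the stated upper bound on $\MOP{P_4 \cup tK_2}{P_4 \cup tK_2}$.

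\textbf{Main point of care.} The only real subtlety is that the rim-trick is stated for a single host graph $G$ rather than for the class. So the argument must be run pointwise, for each $G$ above the threshold, with symmetry used between the two applications at the level of individual graphs. One must also check that the hypotheses of each application hold at the threshold being claimed. Both checks are routine once the outer Hamiltonian cycle is identified as the required rim.
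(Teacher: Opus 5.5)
Your proposal is correct and matches the paper's proof. Both apply the rim-trick (Theorem~\ref{thm:rim-trick-cycle}) along the outer Hamiltonian cycle twice: first to get $\{P_4 \cup tK_2, P_4\}$ for $n \geq 6t+17$, then, after swapping colours, with $F = P_4 \cup tK_2$ to reach the threshold $(2t+3)^2+8 = 4t^2+12t+17$. Your explicit points on running the argument graph by graph and on $4t^2+12t+17 \geq 6t+17$ are details the paper leaves implicit.
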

\begin{proof}
    We first prove that $\{P_4 \cup t K_2, P_4 \}$ is unavoidable in $\KMOP$. 
    Let $G$ be a maximal outerplanar graph on $n \geq 6t+17$ vertices. Since $G$ is Hamiltonian, it contains a cycle of length $\geq 6t+17 = (2t-1+|P_4|)(|P_4|-1) +2|P_4|$. Moreover, as $P_4$ is a linear forest 
    and $G$ either has a red $P_4$ or a blue $P_4$ by Lemma \ref{lem:mop_p4}, this means we can apply Theorem \ref{thm:rim-trick-cycle} to conclude that every red/blue edge-colouring of $G$ graph has a red $P_4 \cup t K_2$ or a blue $P_4$.

    Now, let $H$ be a maximal outerplanar graph on $n' \geq 4t^2 +12t + 17 > 6t+17$  vertices. By the previous result and flipping the colours, $H$ either has a red $P_4$ or a blue $P_4 \cup t K_2$. $P_4 \cup t K_2$ is a linear forest, and since $H$ is Hamiltonian, $H$ contains a cycle of length  $\geq 4t^2 +12t + 17 = (2t-1+|P_4|)(|P_4\cup tK_2|-1) +2|P_4|$ therefore we can apply Theorem \ref{thm:rim-trick-cycle} to obtain that every red/blue edge-colouring of $H$ has a red $P_4 \cup t K_2$ or a blue $P_4 \cup t K_2$ as desired.
\end{proof}

We are now in a position to prove Theorem \ref{thm:mop_diagonal}, which we re-state for convenience. 

\MOPDiagonal*

\begin{proof}
    If $\{H, H\}$ is unavoidable in $\KMOP$ then, by monotonicity, for every component $H^*$ of $H$ the pair $\{H^*, H^*\}$ must be unavoidable in $\KMOP$. Therefore, by Proposition \ref{prop:mop_diagonal_connected}, all the components of $H$ must isomorphic to $K_2$, $P_3$ or $P_4$. By the construction in Figure \ref{fig:mop_fig2}, $H$ cannot have more than one component on three or more vertices. Therefore $H$ must be isomorphic to one of $t K_2$, $P_3 \cup t K_2$ or $P_4 \cup t K_2$ for some integer~$t \geq 0$. 

    By Theorem \ref{thm:mop_p4_matching}, $\{P_4 \cup t K_2, P_4 \cup t K_2\}$ is unavoidable in $\KMOP$, and by monotonicity so are $\{t K_2, t K_2\}$ and $\{P_3 \cup t K_2, P_3 \cup t K_2\}$.
\end{proof}

\subsection{The off-diagonal case: unavoidable \texorpdfstring{$\{H, F\}$}{(H,F)} in \texorpdfstring{$\KMOP$}{KMOP}}

In this section we extend Theorem \ref{thm:mop_diagonal} for the diagonal case, in order to give the full characterisation in Theorem \ref{thm:unavoidable-MOP-general} of all pairs $\{H, F\}$ which are unavoidable in $\KMOP$.

We proceed in our characterisation of all unavoidable pairs $\{H, F\}$ in $\KMOP$ by cases on the value of $\Delta(H)$.

\begin{lemma}\label{lem:delta=5}
    Let $H, F$ be graphs. If $\Delta (H) \geq 5$, then $\{H, F\}$ is avoidable in $\KMOP$.
\end{lemma}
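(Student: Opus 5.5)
By Observation \ref{obs:mop_monotonicity}(i), it suffices to prove that $\{K_{1,5}, K_2\}$ is avoidable in $\KMOP$. Indeed, $\Delta(H) \geq 5$ forces $K_{1,5} \subseteq H$, and every $F$ with at least one edge contains $K_2$; by Observation \ref{obs:mop_isolated_vertices} we may discard isolated vertices, so the case where $F$ is edgeless is excluded by convention. By Observation \ref{obs:H_K2}, the pair $\{K_{1,5}, K_2\}$ is avoidable exactly when there are arbitrarily large maximal outerplanar graphs containing no $K_{1,5}$, that is, with maximum degree at most $4$. The whole proof therefore reduces to exhibiting, for infinitely many $n$, a maximal outerplanar graph of maximum degree $4$. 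We then colour all of its edges red: this colouring has no blue edge and no red $K_{1,5}$.

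The natural family is the triangulated ladder, or ``zig-zag'' strip. Take two paths $u_1 u_2 \cdots u_p$ and $w_1 w_2 \cdots w_p$, and add the rungs $u_i w_i$ for $1 \le i \le p$ together with the diagonals $u_{i+1} w_i$ for $1 \le i < p$. This is presumably one of the constructions $A^{(j)}_p$ in the figures above (for instance $A^{(3)}_p$ or $A^{(4)}_p$, on $2p$ vertices), in which case I would simply cite that figure with all edges recoloured red. Equivalently, the graph is the square of the path $P_{2p}$ with vertex order $w_1, u_1, w_2, u_2, \dots$, each vertex being adjacent to those at distance at most $2$ along this order. Described that way, its maximum degree is at most $4$, since each vertex has at most two neighbours on either side.

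It remains to verify maximal outerplanarity. The graph $P_n^2$ is outerplanar: the Hamiltonian cycle through the odd-indexed vertices in increasing order and then the even-indexed vertices in decreasing order bounds the outer face, and all remaining edges are non-crossing chords. It has $(n-1)+(n-2) = 2n-3$ edges, which by Theorem \ref{thm:mop_properties} is the maximum possible for an outerplanar graph on $n$ vertices, so it is maximal outerplanar. Checking this embedding is the only step needing care, and the edge count makes it routine.
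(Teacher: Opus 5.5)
Your proof is correct and is essentially the paper's argument: take an infinite family of maximal outerplanar graphs with $\Delta \le 4$ and colour every edge red, so there is no red $H$ and no blue edge at all. The paper just cites the families $A^{(3)}_p, A^{(4)}_p$, whereas you exhibit and verify $P_n^2$ explicitly; the detour through $\{K_{1,5},K_2\}$ is harmless but unnecessary, and there is one cosmetic slip (with the order $w_1,u_1,w_2,u_2,\dots$ the diagonals of $P_{2p}^2$ are $u_iw_{i+1}$, not $u_{i+1}w_i$), which is only a relabelling since you verify $P_n^2$ directly.
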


\begin{proof}
    First note that there exists an infinite family of maximal outerplanar graphs $\{G_i \colon i \geq 1\}$ with $\Delta(G_i) = 4$ (e.g. consider the family in Figures \ref{fig:mop_fig3} and \ref{fig:mop_fig4}, without the edge-colouring). Colouring the edges of each $G_i$ red, $H$ is not a subgraph of any $G_i$ since $\Delta(H) \geq 5$. Also, since there are no blue edges, there is no blue $F$ either. Hence $\{H, F\}$ is avoidable in $\KMOP$.
\end{proof}

\begin{proposition}\label{prop:drawing-charact}
    Let $H, F$ be graphs. Then,
    \begin{enumerate}[(i)]
        \item if $\Delta(H) = 3$ and $\{H, F\}$ is unavoidable in $\KMOP$, then $F \simeq K_2$ or $F \simeq P_3$;
        \item if $\Delta(H) = 4$ and $\{H, F\}$ is unavoidable in $\KMOP$, then $F \simeq K_2$.
    \end{enumerate}
\end{proposition}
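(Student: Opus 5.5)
We argue by contrapositive in both parts. For each part, whenever $F$ is not among the allowed graphs, we exhibit an infinite family of $2$-edge-coloured maximal outerplanar graphs with no red $H$ and no blue $F$. The tool is to colour so that the red subgraph has maximum degree strictly less than $\Delta(H)$. Then no red copy of $H$ can exist, and we only need to check which blue graphs $F$ are forced. By Observation \ref{obs:mop_isolated_vertices} we may assume $F$ has no isolated vertices. Hence, if $F \not\simeq K_2$, then $F$ contains either $P_3$ or $2K_2$. If moreover $F\not\simeq P_3$, then $F$ contains one of $2K_2$, $P_4$ or $K_{1,3}$, since a connected graph on at least four vertices contains $P_4$ or $K_{1,3}$. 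By monotonicity (Observation \ref{obs:mop_monotonicity}), it therefore suffices to show that the minimal pairs are avoidable.

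For (ii), with $\Delta(H)=4$, the minimal cases are $\{H,P_3\}$ and $\{H,2K_2\}$. Each calls for an infinite family in which the red graph has maximum degree at most $3$ and the blue edges form either a matching or a star.
\begin{itemize}
\item \emph{Blue matching.} Take the ladder-like maximal outerplanar graphs of Figures \ref{fig:mop_fig3}--\ref{fig:mop_fig4}, which have maximum degree $4$. Choose a blue matching that hits every degree-$4$ vertex; the red graph then has maximum degree $3$, so there is no red $H$, and no blue $P_3$ appears.
\item \emph{Blue star.} Take a fan $F_{1,n}$ and colour all spokes blue and the rim path red. The red graph is a path, of maximum degree $2$, and the blue graph is a star, which contains no $2K_2$.
\end{itemize}
I expect these are exactly the roles played by the families $A^{(3)}_p$, $A^{(4)}_p$ (or $A^{(5)}_p$) in the figures.

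For (i), with $\Delta(H)=3$, the minimal non-allowed $F$ are $2K_2$, $P_4$ and $K_{1,3}$. Here we need red maximum degree at most $2$.
\begin{itemize}
\item \emph{$F=2K_2$.} The fan colouring above already has a red path and a blue star, so there is no blue $2K_2$.
\item \emph{$F=P_4$ and $F=K_{1,3}$.} We need a family whose red subgraph is a linear forest or union of cycles, and whose blue subgraph is a disjoint union of triangles or small stars $K_{1,2}$, or otherwise free of $P_4$ and $K_{1,3}$. A natural candidate is the triangle strip (zigzag) maximal outerplanar graph on vertices $v_1,\dots,v_n$ with edges $v_iv_{i+1}$ and $v_iv_{i+2}$. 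Colour the Hamiltonian path $v_1v_2\cdots v_n$ red, which gives red maximum degree $2$. The remaining edges $v_iv_{i+2}$ form two paths, so they must be partly recoloured.
\end{itemize}
A cleaner choice is to make the blue graph a matching together with a few $P_3$'s while keeping the red degree at most $2$. This is the main obstacle: the blue and red degrees sum to the vertex degree, which can be $4$, so every degree-$4$ vertex needs blue degree exactly $2$. The blue graph must then avoid both $P_4$ and $K_{1,3}$, meaning its components are triangles or $P_3$'s. The first construction to try is to partition the zigzag strip into blue triangles $v_{3j+1}v_{3j+2}v_{3j+3}$ (each of which is a face of the strip) and colour all other edges red. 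Then we check that each vertex receives red degree at most $2$; I expect this verification, possibly after adjusting the construction near the ends, to correspond to the figure family $A^{(5)}_p$ or $A^{(6)}_p$.

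Since every maximal outerplanar graph has its red/blue degrees constrained this tightly, the careful step is verifying the red degree bound for the triangle-packing colouring. If it fails, I would instead use the fan-of-triangles ladder of Figure \ref{fig:mop_fig4} and recolour.
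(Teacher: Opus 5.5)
Your overall strategy is the paper's: infinite coloured families whose red graph has maximum degree below $\Delta(H)$, combined with monotonicity. Part (ii) is complete. The fan with red rim and blue spokes rules out $2K_2$. The $\Delta=4$ strip on an even number of vertices, with the blue perfect matching $v_1v_2, v_3v_4,\dots$ taken along the Hamiltonian path, leaves every vertex with red degree at most $3$ and rules out $P_3$; you should state this matching explicitly. This matches how the paper uses $A^{(1)}_p$ and $A^{(4)}_p$.

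Part (i), however, is not proved as written, for two reasons. First, your reduction step is wrong. A graph with no isolated vertices that is neither $K_2$ nor $P_3$ need not contain $2K_2$, $P_4$ or $K_{1,3}$: the triangle $K_3$ contains none of them. So $K_3$ is missing from your minimal cases in (i); in (ii) this is harmless because $K_3$ contains $P_3$. Second, for $F\in\{P_4,K_{1,3}\}$ you never exhibit a verified family. The proposal stops at a candidate colouring, an ``I expect'', and a fallback.

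Both holes close if you organise (i) as the paper does. Your fan colouring already forces $F$ to be contained in a star, since its blue graph is a star and its red graph is a path of maximum degree $2<\Delta(H)$. This disposes of $2K_2$, $P_4$ and $K_3$ at once, leaving only $K_{1,3}$. For $K_{1,3}$, the colouring you discarded suffices. In the strip with edges $v_iv_{i+1}$ and $v_iv_{i+2}$, colour the path $v_1v_2\cdots v_n$ red and the chords $v_iv_{i+2}$ blue. Both colour classes then have maximum degree $2$, so there is neither a red $H$ nor a blue $K_{1,3}$. You rejected this colouring only because its blue graph contains $P_4$, which no longer matters once the fan handles $P_4$. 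This is the role of $A^{(3)}_p$ in the paper: it forces $\Delta(F)\le 2$, and a star with maximum degree at most $2$ is $K_2$ or $P_3$.

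Your triangle-packing colouring would also have worked had you checked it, provided $n\equiv 0 \pmod{3}$. The other two vertices of the blue face containing $v_i$ lie among $v_{i\pm1},v_{i\pm2}$, so every vertex has blue degree exactly $2$ and red degree at most $2$. The blue components are triangles, which contain neither $P_4$ nor $K_{1,3}$. The restriction on $n$ matters: for $n\equiv 2\pmod{3}$ the uncovered vertex $v_{n-1}$ has red degree $3$.
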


\begin{proof}
    First note that by the construction in Figure \ref{fig:mop_fig1}, for $\Delta(H) \geq 3$ there cannot be a red copy of $H$, and therefore there must be a blue copy of $F$, i.e. $F$ must be a star. Similarly, by the construction in Figure \ref{fig:mop_fig3}, $\Delta(F) \leq 2$. Therefore $F$ must be isomorphic to either $K_2$ or $P_3$. In particular, if $\Delta(H) = 4$ then by the construction in Figure \ref{fig:mop_fig4}, $F$ cannot be isomorphic to $P_3$ and therefore in this case $F$ must be isomorphic to $K_2$.
\end{proof}

The following theorem is an immediate consequence of Proposition \ref{prop:drawing-charact} (ii) and Observation \ref{obs:H_K2}.

\begin{theorem}\label{thm:delta=4}
    Let $H, F$ be two graphs such that $\Delta(H) = 4$. $\{H, F\}$ is unavoidable in $\KMOP$ if, and only if, $F \simeq K_2$ and there exists an integer $N(H) \geq |H|$ such that every maximal outerplanar graph on $N(H)$ or more vertices contains a subgraph isomorphic to $H$.
\end{theorem}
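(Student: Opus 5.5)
The equivalence splits into its two directions, and each reduces directly to results already stated.

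For the forward direction, I assume $\Delta(H)=4$ and that $\{H,F\}$ is unavoidable in $\KMOP$. Proposition \ref{prop:drawing-charact} (ii) applies verbatim and gives $F \simeq K_2$. The pair is therefore $\{H,K_2\}$, and it is unavoidable by hypothesis. Observation \ref{obs:H_K2} then supplies an integer $N(H)\ge |H|$ such that every maximal outerplanar graph on at least $N(H)$ vertices contains a subgraph isomorphic to $H$.

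For the converse, I assume $F\simeq K_2$ and that such an $N(H)$ exists. I take any maximal outerplanar graph $G$ with $|G|\ge N(H)$ and any red/blue edge-colouring of it, and split into two cases. If some edge is blue, that edge is a blue copy of $F=K_2$. Otherwise every edge is red, so the copy of $H$ guaranteed in $G$ is entirely red. Either way the colouring contains a red $H$ or a blue $F$, so $\{H,K_2\}$ is unavoidable with $\MOP{H}{K_2}\le N(H)$. This case split is exactly the content of Observation \ref{obs:H_K2}, which I can simply invoke.

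There is one bookkeeping point. If $H$ has isolated vertices, Observation \ref{obs:mop_isolated_vertices} allows them to be discarded. This changes neither $\Delta(H)$ nor the existence of $N(H)$, apart from the trivial requirement $N(H)\ge|H|$.

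I do not expect a real obstacle here; the theorem is essentially a repackaging of Proposition \ref{prop:drawing-charact} (ii) and Observation \ref{obs:H_K2}. The substantive work sits in the construction $A^{(4)}_p$ used in Proposition \ref{prop:drawing-charact} (ii), which rules out $F\simeq P_3$. Since that result may be assumed, the only care needed is to note that the hypothesis $\Delta(H)=4$ is used precisely there. The characterisation is deliberately left in terms of the existence of $N(H)$ rather than a structural description of $H$.
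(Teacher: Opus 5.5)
Your proposal is correct and is exactly the paper's argument. The paper states this theorem as an immediate consequence of Proposition~\ref{prop:drawing-charact}~(ii), which forces $F \simeq K_2$, combined with Observation~\ref{obs:H_K2}, which gives the equivalence for the pair $\{H, K_2\}$; your remark on isolated vertices matches the paper's standing convention that such vertices are discarded.
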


\begin{remark} \label{rem:mop_H_K2_problem}
    The graphs for which there exists an integer $N(H) \geq |H|$ such that every maximal outerplanar graph on $N(H)$ or more vertices contains a subgraph isomorphic to $H$ are exactly those graphs for which $\ex_{\mathcal{OP}}(n, H) < 2n-3$ for $n$ large enough. We do not fully characterise these graphs, but results on outerplanar Tur{\'a}n numbers already provide some insights on which graphs are and are not in this class.
\end{remark}

\begin{lemma}\label{lem:delta=3-subgraph}
    If $H$ is a graph such that $\Delta(H) = 3$ and $\{H, P_3\}$ is unavoidable in $\KMOP$, then $H$ must be a subgraph of $S_{2,2,1} \cup tK_2$ for some integer $t \geq 0$.
\end{lemma}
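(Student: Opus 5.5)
The plan is to prove the contrapositive. Assume $\Delta(H)=3$ and $H$ is not a subgraph of $S_{2,2,1}\cup tK_2$ for any $t$. For each such $H$ I will exhibit an infinite family of maximal outerplanar graphs with red/blue colourings in which the blue edges form a matching and the red graph does not contain $H$. A blue matching has no blue $P_3$, so this shows $\{H,P_3\}$ is avoidable.

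\textbf{Step 1: one big component.} The first construction is the fan $F_{1,2p}$ with centre $c$ and rim path $v_1\cdots v_{2p}$. Colour all spokes $cv_i$ red, colour the rim edges $v_{2i-1}v_{2i}$ red, and colour the rim edges $v_{2i}v_{2i+1}$ blue. I expect this is essentially the family $A^{(2)}_p$ of Figure~\ref{fig:mop_fig2}. The blue graph is a matching. The red graph is the windmill $\Wd{3}{p}$: every red vertex other than $c$ has red degree $2$, and removing $c$ leaves a red matching. Hence every red subgraph has at most one component with three or more vertices, namely the one through $c$. It follows that if $H$ has two components on at least three vertices, $H$ is avoided. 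So we may assume $H=T\cup tK_2$ with $T$ connected and $\Delta(T)=3$.

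\textbf{Step 2: the shape of $T$.} The same windmill colouring restricts $T$ further. In $\Wd{3}{p}$, $c$ is the only vertex of degree at least $3$, and every path starting at $c$ has at most two further vertices. So $T$ has exactly one vertex of degree $3$, and every leg of $T$ from it has length at most $2$. This leaves two possibilities. Either $T$ is a subgraph of $S_{2,2,2}$, or $T$ contains a triangle through its degree-$3$ vertex, for example a triangle with a pendant edge or a pendant $P_3$.

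\textbf{Step 3: excluding cycles and $S_{2,2,2}$.} To exclude cycles I will use a colouring, such as the family $A^{(1)}_p$ of Figure~\ref{fig:mop_fig1} or a variant of it, in which the red graph is a forest and the blue graph is a matching. Choosing the blue matching to meet every triangular face seems the natural way to achieve this. So $T$ is a tree, hence $T\subseteq S_{2,2,2}$ with $T\not\subseteq S_{2,2,1}$, which forces $T=S_{2,2,2}$. The remaining task is a colouring, for instance of a ladder-like triangulation as in Figures~\ref{fig:mop_fig3}--\ref{fig:mop_fig4}, where the blue graph is a matching and no red vertex of degree $3$ has three disjoint red paths of length $2$ leaving it. Then $S_{2,2,2}$, and hence $H$, is avoided, which completes the contrapositive. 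The main obstacle is designing these last two colourings (red forest, and red graph avoiding $S_{2,2,2}$) while keeping the blue edges a matching. I would first try the zigzag triangulation on $2p$ vertices, colouring one edge per triangle blue so that the blue edges are pairwise disjoint, and then check by hand which red spiders survive.
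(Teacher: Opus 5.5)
\textbf{Steps 1 and 2 are fine.} They match the paper's first step: the fan with red windmill $\Wd{3}{p}$ and blue matching plays the role of $A^{(2)}_p$. Your overall plan (one component inside a windmill, then exclude triangles, then exclude $S_{2,2,2}$) is also exactly the paper's.

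\textbf{First gap: the cycle-excluding colouring in Step 3 cannot exist.} A maximal outerplanar graph on $n$ vertices with red forest and blue matching would give $2n-3\le (n-1)+\lfloor n/2\rfloor$, i.e.\ $n\le 4$. In particular $A^{(1)}_p$ is not available: its blue graph contains $P_3$, which is why the paper uses it only to force $F$ to be a star. Letting the blue matching hit every triangular face makes red triangle-free, not acyclic, so ``$T$ is a tree'' is unsupported. You only need the weaker property. By Step 2 the only cycle $T$ can contain is a triangle through its degree-$3$ vertex, so a family with red triangle-free and blue a matching suffices; this is the role of $A^{(4)}_p$ in the paper. For instance, take the zigzag triangulation with outer cycle $u_1\cdots u_pw_p\cdots w_1$, rungs $u_iw_i$ and diagonals $u_iw_{i+1}$, and colour exactly the rungs blue. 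Every triangle of a maximal outerplanar graph is an inner face and each face contains a rung, so red is triangle-free. Then $T\subseteq S_{2,2,2}$, hence $T=S_{2,2,2}$.

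\textbf{Second gap: the $S_{2,2,2}$-avoiding family is missing, and your suggested candidate fails.} This family (the paper's $A^{(5)}_p$) is the real content of the last step. On the zigzag with the rungs blue, red contains $S_{2,2,2}$ centred at $u_i$ with legs $u_iu_{i-1}u_{i-2}$, $u_iu_{i+1}u_{i+2}$, $u_iw_{i+1}w_{i+2}$. With the diagonals blue, red is the ladder, which contains the spider with legs $u_iu_{i-1}u_{i-2}$, $u_iu_{i+1}u_{i+2}$, $u_iw_iw_{i+1}$.

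A construction that works: start from a red triangle $a_0b_1a_1$. For $i=1,2,\dots$, attach $a_{i+1}$ to the outer edge $a_ib_i$, colouring $a_ia_{i+1}$ red and $b_ia_{i+1}$ blue. Then attach $b_{i+1}$ to the outer edge $a_ia_{i+1}$ with both new edges red.
\begin{itemize}
\item The result is maximal outerplanar, since each step adds a degree-$2$ vertex on an outer edge.
\item The blue edges $b_ia_{i+1}$ form a matching.
\item Red is the chain of triangles $a_{i-1}b_ia_i$, and it contains no $S_{2,2,2}$. Only the $a_i$ have red degree above $2$, and any three legs at $a_i$ put two in the same triangle. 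The leg through $b_i$ (resp.\ $b_{i+1}$) must then continue to $a_{i-1}$ (resp.\ $a_{i+1}$), which is the first vertex of the other leg.
\end{itemize}
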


\begin{proof}
    By the construction in Figure \ref{fig:mop_fig2}, since there is no blue $P_3$, it follows that $H$ must be a subgraph of the windmill graph $\Wd{3}{m}$ for some integer $m \geq 1$. In particular, this means that $H$ can only have one component on $3$ or more vertices.
    
    Furthermore, by the construction in Figure \ref{fig:mop_fig4}, $H$ cannot have any cycle of length $3$. Since $H$ has $\Delta(H) = 3$, must be a subgraph of $\Wd{3}{m}$ for some integer $m \geq 1$ and cannot contain a cycle of length $3$, it follows that $H$ must be a subgraph of $S_{2,2,2} \cup t K_2$ for some integer $t \geq 0$.

    Lastly, by the construction in Figure $\ref{fig:mop_fig5}$, $H$ cannot have a component isomorphic to $S_{2,2,2}$, hence $H$ must be a subgraph of $S_{2,2,1} \cup t K_2$ for some integer $t \geq 0$.
\end{proof}

\begin{lemma} \label{lem:S221unav}
    $\{S_{2,2,1}, P_3\}$ is unavoidable in $\KMOP$ and  $\MOP{S_{2,2,1}}{P_3} = 7$. 
\end{lemma}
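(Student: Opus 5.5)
Two things have to be shown: a lower bound $\MOP{S_{2,2,1}}{P_3} \geq 7$, and unavoidability for every maximal outerplanar graph on at least $7$ vertices.

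\textbf{Lower bound.} I would exhibit a maximal outerplanar graph $G$ on $6$ vertices with a colouring that has no blue $P_3$ and no red $S_{2,2,1}$. Having no blue $P_3$ means the blue edges form a matching. Since $|S_{2,2,1}| = 6$, any red $S_{2,2,1}$ in $G$ would be a spanning tree of the red graph with one vertex of degree $3$. A first candidate is the triangle-strip (zigzag) on $6$ vertices, which has $9$ edges. I would choose a blue matching of size $3$ so that the $6$ remaining red edges either contain no vertex of red degree at least $3$, or make it impossible to attach three legs of lengths $2,2,1$ at any such vertex. I expect this to be a small finite check, or to follow from the families in Figures~\ref{fig:mop_fig1}--\ref{fig:mop_fig6} specialised to $6$ vertices.

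\textbf{Reduction for the upper bound.} The key point is that a non-red $S_{2,2,1}$ in a large graph can be found inside a bounded induced substructure, as in Lemma~\ref{lem:mop_f14}. By the same induction (Theorem~\ref{thm:mop_properties} and Corollary~\ref{cor:for-ind}), every maximal outerplanar graph on $n \geq 7$ vertices contains an induced maximal outerplanar subgraph on exactly $7$ vertices. It therefore suffices to show that every maximal outerplanar graph on exactly $7$ vertices, coloured with its blue edges forming a matching, contains a red $S_{2,2,1}$. There are only finitely many such graphs (four, up to isomorphism, if I recall correctly), so one option is to cite the exhaustive computer search of Appendix~\ref{sec:code}.

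\textbf{A more conceptual route for the $7$-vertex case.} Let $G$ be maximal outerplanar on $7$ vertices with Hamiltonian outer cycle $v_1 \dots v_7$ and $11$ edges. Deleting a blue matching $M$, with $|M| \leq 3$, leaves at least $8$ red edges on $7$ vertices.
\begin{itemize}
\item[(a)] Since $G$ has $7$ vertices, some vertex $v$ has degree at least $4$ in $G$.
\item[(b)] Because $M$ is a matching, $v$ loses at most one edge to $M$, so $v$ has red degree at least $3$.
\item[(c)] I would then build the two legs of length $2$ from $v$ using the red outer-cycle edges and chords near $v$. Each neighbour of $v$ loses at most one edge to $M$, which leaves room to extend.
\end{itemize}

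\textbf{Main obstacle.} The hard step is (c): choosing the legs disjointly in every configuration, since a short case analysis on where the blue edges lie around $v$ is needed. If that analysis grows unwieldy, I would fall back on the computer verification for $n = 7$.
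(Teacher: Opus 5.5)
Your plan is essentially the paper's proof: a $6$-vertex counterexample for the lower bound, an exhaustive check of the maximal outerplanar graphs on $7$ vertices, and the extension to all $n \geq 7$ by repeatedly deleting degree-$2$ vertices via Corollary~\ref{cor:for-ind}. Your conceptual route through (a)--(c) is an optional extra the paper does not attempt. Your zigzag candidate does give the lower bound: write it as the ladder with rails $a_1a_2a_3$ and $b_1b_2b_3$, rungs $a_ib_i$, and diagonals $b_1a_2$, $b_2a_3$, and colour the three rungs blue. The red graph is then the $4$-cycle $a_2a_3b_2b_1$ with pendant edges $a_1a_2$ and $b_2b_3$, and it has no (necessarily spanning) $S_{2,2,1}$: at either possible centre $a_2$ or $b_2$, the nearby pendant vertex must be the length-$1$ leg, while the other pendant vertex is at distance $3$.
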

\begin{proof}
	 Figure \ref{fig:appendixB1} in Appendix \ref{sec:small_cexs} illustrates a maximal outerplanar graph on $n = 6$ vertices with no red $S_{2,2,1}$ or a blue $P_3$. 
     
     By an exhaustive computer search, as outlined in Appendix \ref{sec:code}, one can verify that $n = 7$ is the smallest $n$ such that every red/blue edge-colouring of every maximal outerplanar graph on $n$ vertices contains a red $S_{2,2,1}$ or a blue $P_3$.
     
     We then proceed by induction on the number of vertices to prove that every red/blue edge-colouring of a maximal outerplanar graph on $n \geq 7$ vertices has a red $S_{2,2,1}$ or a blue $P_3$. The base case is already solved, and the inductive case follows by Corollary \ref{cor:for-ind}.
\end{proof}

\begin{theorem}\label{thm:delta=3-unavoidable}
    $\{S_{2,2,1}\cup t K_2, P_3\}$ is unavoidable in $\KMOP$ and  $\MOP{S_{2,2,1} \cup t K_2}{P_3} \leq 4t +22$. 
\end{theorem}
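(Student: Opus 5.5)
Apply the rim-trick (Theorem \ref{thm:rim-trick-cycle}) with $H = S_{2,2,1}$ and $F = P_3$, using Lemma \ref{lem:S221unav} as the base unavoidable pair. Since $P_3$ is a linear forest, the hypotheses on $F$ are met. We need the hypotheses to hold in every maximal outerplanar graph $G$ of order $n \geq 4t+22$. First, $\{S_{2,2,1}, P_3\}$ is unavoidable in $G$, since $n \geq 7 = \MOP{S_{2,2,1}}{P_3}$ by Lemma \ref{lem:S221unav}. Second, $G$ must contain a long enough cycle.

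For the cycle we use Hamiltonicity. Every maximal outerplanar graph on at least $3$ vertices is Hamiltonian, since the boundary of the outer face is a Hamiltonian cycle; this fact was already used in the proof of Theorem \ref{thm:mop_p4_matching}. So $G$ contains a cycle of length $C = n$. The rim-trick requires
\[
C \geq (2t-1+|S_{2,2,1}|)(|P_3|-1) + 2|S_{2,2,1}| = (2t-1+6)\cdot 2 + 12 = 4t+22,
\]
using $|S_{2,2,1}| = 2+2+1+1 = 6$ and $|P_3| = 3$. Hence every $G$ with $n \geq 4t+22$ satisfies the hypotheses.

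Theorem \ref{thm:rim-trick-cycle} then shows that every red/blue edge-colouring of such a $G$ contains a red $S_{2,2,1} \cup tK_2$ or a blue $P_3$. This gives unavoidability in $\KMOP$ and $\MOP{S_{2,2,1}\cup tK_2}{P_3} \leq 4t+22$.

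There is no real obstacle; the only points to check are the vertex count of $S_{2,2,1}$ and the arithmetic of the rim-trick bound. Note that the rim-trick is stated with $|F|$ as a vertex count, and we use it as such. For $t=0$ the bound $22$ is weaker than the exact value $7$, which is fine since the statement only claims an upper bound.
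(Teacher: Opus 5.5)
Your proof is correct and is essentially the paper's argument. The paper also uses Lemma~\ref{lem:S221unav} (valid since $4t+22 \geq 7$), then applies the rim-trick (Theorem~\ref{thm:rim-trick-cycle}) with $P_3$ as the linear forest and the same computation $(2t-1+6)\cdot 2+12 = 4t+22$; the only difference is that you state explicitly that the Hamiltonian outer cycle supplies the required cycle of length $n$, which the paper leaves implicit at this step.
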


\begin{proof}
    Let $G$ be a maximally outerplanar graph on $n \geq 4t +22$ vertices. First note that since $4t+22 \geq 7$, every red/blue edge-colouring of $G$ has a red $S_{2,2,1}$ or a blue $P_3$ by Lemma \ref{lem:S221unav}. Hence we can apply Theorem \ref{thm:rim-trick-cycle}, noting that $4t +22 = (2t-1+|S_{2,2,1}|)(|P_3| - 1) +2|S_{2,2,1}|$ and $P_3$ is a linear forest. This in turn gives us that every red/blue edge-colouring of $G$ must have a red $S_{2,2,1}\cup t K_2$ or a blue $P_3$ as required.
\end{proof}

\begin{lemma}\label{lem:P5P4-unav}
    $\{P_5,P_4\}$ is unavoidable in $\KMOP$ and  $\MOP{P_5}{P_4} = 9$.    
\end{lemma}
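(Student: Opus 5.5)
The plan follows the pattern of Lemma \ref{lem:S221unav}: a small counterexample for the lower bound, a verified base case at $n = 9$, and then induction using Corollary \ref{cor:for-ind}.

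\textbf{Lower bound.} I will exhibit a maximal outerplanar graph on $8$ vertices with a red/blue edge-colouring that has no red $P_5$ and no blue $P_4$. This gives $\MOP{P_5}{P_4} \geq 9$.

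A natural candidate is built from the avoiding colourings already used. The red subgraph should have all components small, for instance red triangles or stars, so that it contains no $P_5$. The blue subgraph should be a disjoint union of triangles and stars, so that it contains no $P_4$.

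Concretely, I would take a fan-like or zig-zag triangulation of an octagon. Blue edges should form vertex-disjoint stars $K_{1,2}$ or triangles. Red edges should form pieces with longest path on at most $4$ vertices, for example a $K_4^-$ configuration or stars. The exact graph will be located by the exhaustive computer search of Appendix \ref{sec:code} and drawn in Appendix \ref{sec:small_cexs}.

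\textbf{Base case $n = 9$.} I will check by exhaustive search that every red/blue edge-colouring of every maximal outerplanar graph on exactly $9$ vertices contains a red $P_5$ or a blue $P_4$.

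A human proof of this would be long. One way to organise it is to use that every such graph contains a Hamiltonian cycle $C_9$. On that cycle, Lemma \ref{lem:interval-lemma} forces many red edges whenever there is no blue $P_4$. The blue subgraph, being $P_4$-free, is a disjoint union of triangles and stars. One would then case-split on the structure of these blue stars and triangles relative to the chords.

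I expect this base case to be the main obstacle. This is why I would rely on the computer search, as the paper already does for Lemma \ref{lem:S221unav}.

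\textbf{Inductive step.} For $n \geq 10$, let $G$ be maximal outerplanar on $n$ vertices with a red/blue edge-colouring. By Theorem \ref{thm:mop_properties}, $G$ has a vertex $x$ of degree $2$, and by Corollary \ref{cor:for-ind}, $G - x$ is maximal outerplanar on $n - 1 \geq 9$ vertices.

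The restricted colouring of $G - x$ contains a red $P_5$ or a blue $P_4$ by the induction hypothesis. That copy is also present in $G$.

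This establishes unavoidability and $\MOP{P_5}{P_4} = 9$.
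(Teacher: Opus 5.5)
Your proposal matches the paper's proof: an explicit $8$-vertex counterexample for the lower bound (the paper's Figure~\ref{fig:appendixB2}), an exhaustive computer check of all maximal outerplanar graphs on $9$ vertices, and induction on $n$ via Theorem~\ref{thm:mop_properties} and Corollary~\ref{cor:for-ind}. The only thing you leave unspecified is the witness itself; one that works is the octagon $1,\dots,8$ with chords $24,26,28,46,68$, with $24,34,45,68,78,81$ blue (two disjoint copies of $K_{1,3}$, centred at $4$ and $8$) and the other seven edges red (a spanning double star centred at $2$ and $6$, whose longest path has four vertices).
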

\begin{proof} 
    Figure \ref{fig:appendixB2} in Appendix \ref{sec:small_cexs} illustrates a maximal outerplanar graph on $n = 8$ vertices with no red $P_5$ or a blue $P_4$.
    
	By an exhaustive computer search, as outlined in Appendix \ref{sec:code}, one can verify that $n = 9$ is the smallest $n$ such that every red/blue edge-colouring of every maximal outerplanar graph on $n$ vertices contains a red $P_5$ or a blue $P_4$. Furthermore, we can apply Corollary \ref{cor:for-ind} to conclude by induction that every maximal outerplanar graph on $n \geq 9$ vertices has this property.
\end{proof}

\begin{theorem}\label{thm:delta=2-p5p4-unav}
    $\{P_5\cup t_1 K_2, P_4 \cup t_2 K_2\}$ is unavoidable in $\KMOP$ and \[\MOP{P_5 \cup t_1 K_2}{P_4 \cup t_2 K_2}
    \leq 4 t_1 t_2 + 8t_2 + 6 t_1 + 20.\]
\end{theorem}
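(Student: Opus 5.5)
The plan is to apply the rim-trick (Theorem \ref{thm:rim-trick-cycle}) twice, exactly as in the proof of Theorem \ref{thm:mop_p4_matching}. The base input is Lemma \ref{lem:P5P4-unav}: every red/blue edge-colouring of a maximal outerplanar graph on at least $9$ vertices has a red $P_5$ or a blue $P_4$. We also use that every maximal outerplanar graph on $n$ vertices is Hamiltonian, so it contains a cycle of length $n$.

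\emph{Step 1: add the red matching.} Let $G$ be maximal outerplanar on $n$ vertices. Apply Theorem \ref{thm:rim-trick-cycle} with $H = P_5$, $F = P_4$ (a linear forest) and $t = t_1$. It yields that $\{P_5 \cup t_1K_2, P_4\}$ is unavoidable in $G$ once
\[ n \geq (2t_1 - 1 + 5)(4-1) + 2\cdot 5 = 6t_1 + 22 . \]
By Symmetry (Observation \ref{obs:symmetry}), swapping colours, every colouring of such a $G$ then has a red $P_4$ or a blue $P_5 \cup t_1K_2$.

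\emph{Step 2: add the blue matching.} Apply Theorem \ref{thm:rim-trick-cycle} again with $H = P_4$, $F = P_5 \cup t_1K_2$ and $t = t_2$. Here $F$ is a linear forest on $5 + 2t_1$ vertices. The required cycle length is
\[ (2t_2 - 1 + 4)(5 + 2t_1 - 1) + 2\cdot 4 = (2t_2+3)(2t_1+4) + 8 = 4t_1t_2 + 8t_2 + 6t_1 + 20 , \]
which is exactly the claimed bound. This gives a red $P_4 \cup t_2K_2$ or a blue $P_5 \cup t_1K_2$. Swapping colours back gives the statement.

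\emph{Main obstacle: the case $t_2 = 0$.} Step 2 needs the Step 1 conclusion at $n = 4t_1t_2+8t_2+6t_1+20$. For $t_2 \geq 1$ this is at least $6t_1+28 \geq 6t_1+22$, so there is no issue. For $t_2 = 0$, however, the target $6t_1+20$ falls just short of the $6t_1+22$ from Step 1. I would fix this by using the strict inequality inside the proof of Theorem \ref{thm:rim-trick-cycle}. There one has
\[ r > \frac{\rimbd - 2|H|}{|F|-1} - |H| , \]
and $r$ is an integer. So it suffices that $(\rimbd - 10)/3 - 5 > 2t_1 - 2$, i.e.\ $\rimbd > 6t_1 + 19$, and $\rimbd = 6t_1 + 20$ works.

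With Step 1 valid from $6t_1+20$ vertices, the threshold $4t_1t_2+8t_2+6t_1+20$ dominates both the Step 1 threshold and the base threshold $9$ of Lemma \ref{lem:P5P4-unav}. This completes the argument.
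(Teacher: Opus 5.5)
Your proof is correct and follows the paper's argument. Like the paper, you apply the rim-trick (Theorem~\ref{thm:rim-trick-cycle}) twice: first you attach $t_1K_2$ to the red $P_5$ against a blue $P_4$, then, with colours swapped, you attach $t_2K_2$, which gives exactly the bound $(2t_2+3)(2t_1+4)+8$. Your separate treatment of $t_2=0$ is genuinely needed. The paper asserts $4t_1t_2+8t_2+6t_1+20 \geq 6t_1+22$ whenever $\max(t_1,t_2)\neq 0$, but this fails for $t_2=0$, $t_1\geq 1$. Your integrality argument closes that gap: since $r > (C-10)/3-5$ and $r$ is an integer, $r\geq 2t_1-1$ as soon as $C\geq 6t_1+20$.
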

\begin{proof}
    We obtain these results by applying the Theorem \ref{thm:rim-trick-cycle} twice. The two different bounds are obtained by different orders of applying the trick. Let $G$ be a maximal outerplanar graph on $n \geq 4 t_1 t_2 + 8t_2 + 6 t_1 + 20$ vertices. Firstly, note that if $t_1,t_2$ are both zero, $\MOP{P_5}{P_4} = 9$ so we can assume that  $\max(t_1,t_2)\neq 0$. With this assumption, we have that $4 t_1 t_2 + 8t_2 + 6 t_1 + 20 \geq 6t_1 + 22 = (2t_1 -1 + |P_5|)(|P_4| -1)+ 2|P_5|$. Moreover by Lemma \ref{lem:P5P4-unav}, $\{P_5, P_4\}$ is unavoidable in $G$, therefore we can apply Theorem \ref{thm:rim-trick-cycle} to obtain that $\{P_5\cup t_1 K_2, P_4\}$ is unavoidable in $G$. 
    
    Finally, since $P_5\cup t_1 K_2$ is a linear forest, and $4 t_1 t_2 + 8t_2 + 6 t_1 + 20  = (2t_2-1+|P_4|)(|P_5\cup t_1 K_2| - 1)+2|P_4|$, we can apply Theorem \ref{thm:rim-trick-cycle} again (with $H,F$ flipped) to obtain that $\{P_5\cup t_1 K_2, P_4\cup t_2 K_2\}$ is unavoidable in $G$.
\end{proof}

We are now in a position to prove Theorem \ref{thm:unavoidable-MOP-general}, which we restate for convenience. 

\MOPGeneral*

\begin{proof}
    Suppose that $\{H, F\}$ is unavoidable in $\KMOP$.
    First note that $\Delta(H), \Delta(F) \leq 4$ by Lemma \ref{lem:delta=5}. If one of $H, F$ has maximum degree $4$, then without loss of generality let $H$ be the graph of maximum degree $4$ and let $F$ be the other graph. By Proposition \ref{prop:drawing-charact}, $F$ must be $K_2$ and by Theorem \ref{thm:delta=4}, $H$ must be unavoidable in $\KMOP$.
    
    If one of $H, F$ has maximum degree $3$, then without loss of generality let $H$ be the graph of maximum degree $3$ and let $F$ be the other graph. By Proposition \ref{prop:drawing-charact}, $F$ is either $K_2$ or $P_3$, and by Lemma \ref{lem:delta=3-subgraph} $H$ is a subgraph of $S_{2,2,1} \cup tK_2$. 
    
    Finally, if both $H,F$ have degree $2$,
    without loss of generality, suppose the order of the largest component of $H$ is greater than or equal to the order of the largest component of $F$. By Figure \ref{fig:mop_fig6}, $H$ has at most a largest component of order $5$, and $F$ has at most a largest component of order $4$. Moreover by Figure \ref{fig:mop_fig2}, the other components must all be $K_2$s. Therefore $H$ is a subgraph of $P_5 \cup t_1 K_2$ and $F$ is a subgraph of $P_4 \cup t_2 K_2$ as required.

    Conversely, by Theorems \ref{thm:delta=4}, \ref{thm:delta=3-unavoidable} and \ref{thm:delta=2-p5p4-unav} for the pairs of graphs $\{H, F\}$ specified, $\{H, F\}$ is indeed unavoidable in $\KMOP$. 
\end{proof}

\section{(Un)avoidable pairs in maximal planar graphs}
\label{sec:mp}

Let $H$ and $F$ be graphs, possibly with $H = F$, and recall the definition of an \textit{unavoidable} pair $\{H, F\}$ in $\KMP$ from Definition \ref{def:umpp}. Recall also that if $\{H,F\}$ is unavoidable, we write $\MP{H}{F}$ for the smallest positive integer $N$ such that every red/blue edge-colouring of every maximal planar graph on at least $N$ vertices contains either a red copy of $H$ or a blue copy of $F$. We begin by summarising a number of observations. 

Given a positive integer $n$ and a graph $H$, the \textit{planar Tur{\'a}n number} $\ex_{\mathcal{P}}(n, H)$ is the maximum number of edges in an $H$-free planar graph on $n$ vertices. Planar Tur{\'a}n numbers have been extensively studied, see \cite{GLZ, SHI2025104134} and references therein. 

Let $n$ be a sufficiently large integer and consider a maximal planar graph $G$ on $n$ vertices with a red/blue edge-colouring. Suppose that there is no red copy of $H$ in this colouring; then the number of red edges must be at most $\ex_{\mathcal{P}} (n, H)$. Since $G$ is maximal planar, then $G$ has $3n - 6$ edges and therefore there must be at least $(3n - 6) - \ex_{\mathcal{P}} (n, H)$ blue edges. If for all $n$ sufficiently large we have that $\ex_{\mathcal{P}}(n, H) + \ex_{\mathcal{P}}(n, F) < 3n - 6$, then it follows that there must be a blue copy of $F$ in $G$ and therefore $\{H, F\}$ in unavoidable in $\KMP$.

\begin{proposition} \label{prop:mp_turan}
    Let $H$ and $F$ be graphs such that, for all $n$ sufficiently large, $\ex_{\mathcal{P}}(n, H) + \ex_{\mathcal{P}}(n, F) < 3n - 6$. Then $\{H, F\}$ is unavoidable in $\KMP$.

    Furthermore, if $\ex_{\mathcal{P}}(n, H) < \frac{3n - 6}{2}$, then $\{H, H\}$ is unavoidable in $\KMP$.
\end{proposition}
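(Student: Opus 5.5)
The plan is a direct counting argument; it essentially formalises the paragraph preceding the statement. Fix $N$ such that $\ex_{\mathcal{P}}(n, H) + \ex_{\mathcal{P}}(n, F) < 3n - 6$ for all $n \geq N$. Let $G$ be any maximal planar graph on $n \geq \max(N, 3)$ vertices, and fix an arbitrary red/blue edge-colouring of $G$. Since $G$ is maximal planar with $n \geq 3$, it has exactly $3n-6$ edges. This is a standard fact from Euler's formula, and the paper already uses it just above.

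Denote by $G_R$ and $G_B$ the spanning subgraphs of $G$ formed by the red and blue edges respectively. Both are planar, being subgraphs of a planar graph, and $|E(G_R)| + |E(G_B)| = 3n - 6$. Suppose for contradiction that $G_R$ contains no copy of $H$ and $G_B$ contains no copy of $F$. By definition of the planar Tur\'an number, $|E(G_R)| \leq \ex_{\mathcal{P}}(n, H)$ and $|E(G_B)| \leq \ex_{\mathcal{P}}(n, F)$. Summing gives $3n - 6 \leq \ex_{\mathcal{P}}(n, H) + \ex_{\mathcal{P}}(n, F) < 3n-6$, a contradiction. Hence every colouring of every such $G$ has a red $H$ or a blue $F$. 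The threshold $N$ works uniformly over all of $\KMP$, so $\{H, F\}$ is unavoidable in $\KMP$ in the sense of Definition \ref{def:umpp}.

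For the second part, I read the hypothesis $\ex_{\mathcal{P}}(n, H) < \frac{3n-6}{2}$ as holding for all sufficiently large $n$, which matches the first part. Then I take $F = H$, so that $2\,\ex_{\mathcal{P}}(n,H) < 3n-6$, and apply the first part.

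There is no real obstacle. The only points needing care are the quantifier ``for all $n$ sufficiently large'' in the second clause, and the small-$n$ edge-count formula, which requires $n \geq 3$ and is absorbed by choosing $N \geq 3$.
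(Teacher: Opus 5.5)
Your proof is correct and is the same counting argument the paper gives in the paragraph preceding the statement: if there is no red $H$, there are at most $\ex_{\mathcal{P}}(n,H)$ red edges, so at least $3n-6-\ex_{\mathcal{P}}(n,H) > \ex_{\mathcal{P}}(n,F)$ blue edges, which forces a blue $F$; the second part then follows by taking $F = H$. Your added care about reading the second clause as holding for all sufficiently large $n$, and about choosing $N \geq 3$ so the edge count $3n-6$ applies, just makes the paper's informal argument precise.
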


If $H$ is a tree on $\leq 4$ vertices, then $\ex_{\mathcal{P}} (n, H) = \ex (n, H) \leq n$ for all integers $n \geq |H|$. The following corollary is an immediate consequence of Proposition \ref{prop:mp_turan}

\begin{corollary} \label{cor:mp_small_trees}
    If $H$ is a tree on at most four vertices, $\{H, H\}$ is unavoidable in $\KMP$.
\end{corollary}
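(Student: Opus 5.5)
The plan is to obtain the corollary directly from the second part of Proposition~\ref{prop:mp_turan}. By that proposition, it suffices to show that for every tree $H$ on at most four vertices and all sufficiently large $n$ we have $\ex_{\mathcal{P}}(n,H) < \frac{3n-6}{2}$.

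First, I would list the trees on at most four vertices. Since isolated vertices may be discarded by Observation~\ref{obs:mop_isolated_vertices}, these are $K_2$, $P_3$, $P_4$ and $K_{1,3}$. By monotonicity (Observation~\ref{obs:mop_monotonicity}), it is in fact enough to handle the maximal ones, $P_4$ and $K_{1,3}$, although a direct bound works for all four.

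Next, I would bound the ordinary Turán numbers, which suffices because $\ex_{\mathcal{P}}(n,H)\le \ex(n,H)$.
\begin{itemize}
\item For $H=K_{1,3}$, an $H$-free graph has maximum degree at most $2$, so it has at most $n$ edges.
\item For $H=P_4$, each component of a $P_4$-free graph is a triangle or a star. Each such component has at most as many edges as vertices, so again there are at most $n$ edges.
\item For $K_2$ and $P_3$, the bounds are $0$ and $\lfloor n/2\rfloor$ respectively.
\end{itemize}
In every case $\ex_{\mathcal{P}}(n,H)\le n$.

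Finally, I would compare with the threshold. We have $n<\frac{3n-6}{2}$ exactly when $n>6$, so for all $n\ge 7$ the hypothesis of Proposition~\ref{prop:mp_turan} holds and $\{H,H\}$ is unavoidable in $\KMP$. The only step needing any care is the structural claim that a $P_4$-free component is a star or a triangle, which is the classical Erdős–Gallai bound and causes no real obstacle.
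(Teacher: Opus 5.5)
Your proposal is correct and takes the same approach as the paper: bound $\ex_{\mathcal{P}}(n,H)\le \ex(n,H)\le n$ for every tree $H$ on at most four vertices, then apply the second part of Proposition~\ref{prop:mp_turan}, since $n<\frac{3n-6}{2}$ for all $n\ge 7$. You also justify the bound $\ex(n,H)\le n$ case by case, which the paper only asserts.
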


On the other hand, if $H$ is a tree on five vertices, then $\ex_{\mathcal{P}} (n, H) \leq \frac{3n}{2}$ for all positive integers $n$. While Proposition \ref{prop:mp_turan} reduces some instances of our problem to the study of planar Tur{\'a}n numbers, it does not provide a characterisation of unavoidable pairs in $\KMP$. Indeed, for many graphs $H$, such as trees on five vertices, $\ex_{\mathcal P}(n, H)$ is too large to force a monochromatic copy of $H$ by a simple counting argument. Consequently, the study of $\MP{H}{F}$ involves structural questions about maximal planar graphs that are not captured by planar Tur{\'a}n numbers alone.

Lastly, we note the following well-established result on the length of the longest cycle in a maximal planar graph on $n$ vertices.

\begin{theorem}[\cite{CHEN200280}, Corollary 3.5] \label{thm:mp_longest_cycle}
    Every maximal planar graph on $n$ vertices has a cycle on at least $n^{\log_3 2}$ vertices.
\end{theorem}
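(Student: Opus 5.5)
This is a cited result (Chen and Yu, Corollary 3.5), so rather than reprove it in full, I would outline the standard argument behind it. The key fact is that every $3$-connected planar graph on $n$ vertices has a cycle of length at least $n^{\log_3 2}$. Every maximal planar graph on $n \geq 4$ vertices is a triangulation and hence $3$-connected; for $n \leq 3$ a triangle, edge or single vertex trivially suffices. So the task reduces to the $3$-connected case, which I would handle by induction on $n$ via Tutte-path decompositions. Throughout I would fix an outer face containing a designated edge $xy$.

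\textbf{Step 1: strengthen the induction hypothesis.} Instead of bounding cycles directly, I would prove a bound on paths. For a $2$-connected plane graph $G$ with outer cycle $C$, vertices $x,y$ on $C$, and a suitable weight, I would show there is an $x$--$y$ path $P$ of length at least $f(n)$, with $f(n) = n^{\log_3 2}$. The path $P$ should also be a Tutte path: every $P$-bridge has at most three attachments, and at most two if it contains an edge of $C$.

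\textbf{Step 2: the inductive split.} I would take a $2$-cut or a suitable separating structure and split $G$ into at most three pieces $G_1, G_2, G_3$ whose sizes sum to about $n$. The largest piece then has at least $n/3$ vertices. The central recursive claim is that a long path can be obtained in at least two of the pieces and concatenated. Then $f$ satisfies $f(n) \geq f(n_1) + f(n_2)$, where $n_1, n_2$ are the two largest pieces, and the worst case $n_1 = n_2 = n/3$ gives $f(n) \geq 2 f(n/3)$. This solves to $f(n) = n^{\log_3 2}$.

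\textbf{Step 3: close the cycle.} Apply the path statement to an outer edge $xy$ and add that edge to obtain a cycle.

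\textbf{Main obstacle.} The hard step is ensuring the decomposition always lets two of the three pieces contribute, rather than only the largest one. This is exactly the technical heart of Chen and Yu's paper: the careful choice of the $3$-separations and the bridge structure. In the present paper I would simply invoke their Corollary 3.5 rather than redo that analysis.
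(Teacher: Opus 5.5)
Your proposal matches the paper, which gives no argument beyond citing Chen and Yu. Your observation that a maximal planar graph on $n\ge 4$ vertices is $3$-connected is exactly what makes their bound for $3$-connected planar graphs apply, and nothing rests on your Steps 1--3. That is just as well, because Step 1 is false as literally stated for arbitrary $2$-connected plane graphs ($K_{2,m}$ has no path on more than five vertices), so an induction of that kind must run over a more restricted class.

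One slip: $K_1$ and $K_2$ are maximal planar but contain no cycle, so an edge or a single vertex does not ``trivially suffice''. The statement, both here and in the paper, should be read for $n\ge 3$, with $n=3$ covered by the triangle.
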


We proceed by giving a number of useful constructions which we will use to show avoidable pairs in $\KMP$, and then proceed with proving a full characterisation of all pairs $\{H, H\}$, where $H$ is connected, which are unavoidable in $\KMP$.

\subsection{Constructions for families of avoidable pairs in \texorpdfstring{$\KMP$}{KMP}}

We give two constructions of infinite families of $2$-edge-coloured maximal planar graphs which will allows use to show several avoidable pairs $\{H, F\}$ in $\KMP$.

First consider the infinite family of maximal planar graphs $\{B^{(1)}_p \colon p \geq 1\}$ with a $2$-edge-colouring constructed as in Figure~\ref{fig:ck_pk_mp_infty}.

\begin{figure}[ht!]
   \ctikzfig{figures/ck_pk_mp_infty}
   \caption{The family of $2$-edge-coloured maximal planar graphs $B^{(1)}_p$ on $4p + 2$ vertices, $p \geq 1$.}
   \label{fig:ck_pk_mp_infty}
\end{figure}

For every integer $p \geq 1$, neither monochromatic subgraph of $B^{(1)}_p$ contains a cycle $C_s$ for any $s \geq 4$, or $2 P_3$. Together with monotonicity, we have the following result.

\begin{lemma} \label{lem:mp_avoid_const_A}
    For all integers $s, t \geq4$, we have that $\{C_s, C_t\}$, $\{C_s, 2P_3\}$ and $\{2P_3, 2P_3\}$ are avoidable in $\KMP$. Furthermore, if $H, F$ are two graphs with a subgraph isomorphic to $2 P_3$ or $C_s$ for some $s \geq 4$, then $\{H, F\}$ is avoidable in $\KMP$.
\end{lemma}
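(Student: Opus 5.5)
The statement follows almost entirely from the claimed property of the coloured family $\{B^{(1)}_p \colon p \geq 1\}$ in Figure~\ref{fig:ck_pk_mp_infty}, combined with Observation~\ref{obs:mop_monotonicity}. The plan is first to justify that property: for every $p \geq 1$, neither the red nor the blue subgraph of $B^{(1)}_p$ contains a cycle $C_s$ with $s \geq 4$, nor a copy of $2P_3$. Then the avoidability statements follow by exhibiting these colourings.

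For the structural claim, I would read off from the figure the shape of each colour class. The intended picture is that each colour class is a disjoint union of small pieces, such as triangles and isolated edges, except for at most one nontrivial component. Concretely, I expect the following: one colour class consists of a bounded number of short components (for instance a triangle or a star-like piece around the two special vertices) together with a matching; the other colour class has every component of order at most $3$ (triangles or $P_3$'s), with only one such component containing a $P_3$, or it has its $P_3$'s all meeting a common vertex.

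The verification then reduces to two checks on each colour class. First, a $C_s$ with $s \geq 4$ would have to live inside a single component of order at least $4$ that contains a cycle; the only possible cycles are triangles, so this cannot happen. Second, a copy of $2P_3$ needs two vertex-disjoint paths on three vertices, and these would have to come from different components or from one component of order at least $6$. By construction, only one component per colour has order at least $3$, or all $P_3$'s share a vertex, so no such pair exists. I would carry this out component by component, using the fact that the construction repeats a fixed gadget $p$ times. It therefore suffices to check one gadget together with its attachments to the two poles. I expect this to be the main obstacle: making sure that edges incident to the high-degree pole vertices do not glue many small monochromatic pieces into a large component that contains a long cycle or two disjoint $P_3$'s.

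Given the claim, fix $s,t \geq 4$. For each $p$, the colouring of $B^{(1)}_p$ has no red $C_s$ and no blue $C_t$, no red $C_s$ and no blue $2P_3$, and no red and no blue $2P_3$. Since $|B^{(1)}_p| = 4p+2 \to \infty$, there are arbitrarily large maximal planar graphs admitting such colourings, so each of the three pairs is avoidable in $\KMP$.

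For the final sentence, let $H$ contain a subgraph $H^*$ and $F$ contain a subgraph $F^*$, each isomorphic to $2P_3$ or to some $C_s$ with $s \geq 4$. Then $\{H^*,F^*\}$ is one of the pairs just treated, possibly after applying Observation~\ref{obs:symmetry}. Observation~\ref{obs:mop_monotonicity}(i) then gives that $\{H,F\}$ is avoidable in $\KMP$.
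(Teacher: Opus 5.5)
Your proposal follows the same route as the paper. The paper likewise just asserts that neither colour class of $B^{(1)}_p$ contains a $C_s$ with $s \geq 4$ or a copy of $2P_3$, and then concludes by monotonicity.

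One caution about your guessed reading of the figure. Since $B^{(1)}_p$ has $3n-6$ edges, your first option cannot occur: a class with all components of order at most $3$ has at most $n$ edges, and a matching plus boundedly many small pieces has at most $n/2+O(1)$ edges. The relevant picture is therefore your second alternative, where each colour class is a pole joined to most vertices plus a matching. Then every monochromatic $P_3$ meets that pole and every monochromatic cycle is a triangle.
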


The following remark highlights a number of graph families to which Lemma \ref{lem:mp_avoid_const_A} applies via monotonicity.

\begin{remark} \label{rem:mp_2P3}
    Observe that for all $k \geq 6$, $P_k$ has a subgraph isomorphic to $2P_3$. Furthermore, if $H$ is a disconnected graph which has two components of order at least three, then there is a subgraph isomorphic to $2P_3$ in $H$. 
\end{remark}

Next consider the infinite family of maximal planar graphs $\{B^{(2)}_p \colon p \geq 1\}$ with a $2$-edge-colouring as in Figure~\ref{fig:c3_mp_infty}.

\begin{figure}[htbp!]
   \ctikzfig{figures/c3_mp_infty}
   \caption{The family of $2$-edge-coloured maximal planar graphs $B^{(2)}_p$ on $p + 2$ vertices, $p \geq 1$.}
   \label{fig:c3_mp_infty}
\end{figure}

For every integer $p \geq 1$, the subgraph induced by the blue edges contains no triangles, whilst the subgraph induced by the red edges contains no cycles. It follows that for every integer $k \geq 3$, $\{C_k, C_3\}$ is avoidable in $\KMP$. By monotonicity and symmetry, we have the following result.

\begin{lemma} \label{lem:mp_avoid_const_B}
   For every integer $k \geq 3$, $\{C_3, C_k\}$ and $\{C_k, C_3\}$ are both avoidable in $\KMP$. Furthermore, if $H$ and $F$ are two graphs such that one has a subgraph isomorphic to $C_3$ and the other has a subgraph isomorphic to $C_k$ for some $k \geq 3$, then $\{H, F\}$ is avoidable in $\KMP$.
\end{lemma}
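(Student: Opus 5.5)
The statement to prove is Lemma~\ref{lem:mp_avoid_const_B}. It rests on the properties of the coloured family $B^{(2)}_p$ claimed just before it, so the plan has two parts. First I verify that family: for every $p \geq 1$, $B^{(2)}_p$ is a maximal planar graph on $p+2$ vertices, its blue subgraph is triangle-free, and its red subgraph is acyclic. Then I deduce the lemma from monotonicity (Observation~\ref{obs:mop_monotonicity}) and symmetry (Observation~\ref{obs:symmetry}).

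For the verification, I read the figure as follows (my assumption): $B^{(2)}_p$ is the double wheel $2K_1 + P_p$, i.e.\ two apexes $u,w$ joined to every vertex of a path $v_1,\dots,v_p$, together with the edge $uw$. It is maximal planar because it is a triangulation with $2p+(p-1)+1 = 3(p+2)-6$ edges. I take the red edges to be a spanning tree, for instance all edges at $u$ together with the edge $wv_1$; this is acyclic. The remaining blue edges are the path edges $v_iv_{i+1}$ and the edges $wv_i$ for $i\geq 2$. The danger is a blue triangle $wv_iv_{i+1}$, so the figure must in fact be arranged to kill these: for example, colour the path edges alternately, or make every triangle of the triangulation contain a red edge while the red edges still form a forest. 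Checking that such a balance is possible for every $p$ is the one genuinely nontrivial point. Concretely, I must confirm that every facial triangle and every separating triangle contains a red edge, while red stays acyclic. Since the graph is given by a picture, this is a direct finite-pattern check; the periodic structure reduces it to a constant number of local configurations.

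Granting this, take $k\geq 3$ and the colouring of $B^{(2)}_p$ from the figure. The red graph has no cycle, so there is no red $C_k$. The blue graph has no triangle, so there is no blue $C_3$. Because $p$ is arbitrary, this gives arbitrarily large members of $\KMP$ avoiding the pair, so $\{C_k, C_3\}$ is avoidable in $\KMP$. Symmetry then gives that $\{C_3, C_k\}$ is avoidable.

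For the general statement, suppose one of $H,F$ contains $C_3$ and the other contains $C_k$. By symmetry, assume $H \supseteq C_k$ and $F \supseteq C_3$. Then $\{C_k, C_3\}$ is avoidable, and Observation~\ref{obs:mop_monotonicity}(i), applied with $H^* = C_k$ and $F^* = C_3$, shows that $\{H,F\}$ is avoidable.
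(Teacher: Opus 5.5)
Your reduction is the paper's argument: arbitrarily large maximal planar graphs whose red graph is acyclic and whose blue graph is triangle-free have no red $C_k$ and no blue $C_3$, and symmetry plus monotonicity give the rest. The paper simply asserts these two properties of $B^{(2)}_p$ from its figure. The gap is that your write-up never establishes this ingredient, and your concrete attempt at it fails:
\begin{itemize}
\item If ``all edges at $u$'' includes $uw$, then adding $wv_1$ makes $uwv_1$ a red triangle.
\item In either reading, the blue graph contains the triangles $wv_iv_{i+1}$ for $2\le i\le p-1$, as you observe.
\item Your patch of colouring the path edges alternately makes things worse: any red $v_iv_{i+1}$, together with the red edges $uv_i$ and $uv_{i+1}$, is a red triangle.
\end{itemize}
So ``granting this'' rests the proof on a claim that your own trial colouring does not deliver.

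The fix is immediate and needs no delicate balancing. In $K_2+P_p$, colour the $2p$ edges between $\{u,w\}$ and $\{v_1,\dots,v_p\}$ blue, and colour the remaining edges $uw, v_1v_2,\dots,v_{p-1}v_p$ red. The blue graph is $K_{2,p}$, which is bipartite and hence triangle-free. The red graph is $K_2\cup P_p$, a linear forest. Equivalently, every triangle of $K_2+P_p$ has the form $uwv_i$, $uv_iv_{i+1}$ or $wv_iv_{i+1}$, and each contains the red edge $uw$ or $v_iv_{i+1}$. Whatever the figure actually depicts, this explicit family is enough, and the rest of your deduction then goes through verbatim. It also accounts for the paper's next claim that neither colour class contains $S_{2,2,1}$: the red graph has maximum degree $2$, and in $K_{2,p}$ each $v_i$ has only $u$ and $w$ as neighbours, so at most one leg of length two can leave a centre.
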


Similarly, one can verify that for the family $\{B^{(2)}_p \colon p \geq 1\}$ with a $2$-edge-colouring as in Figure~\ref{fig:c3_mp_infty}, the monochromatic red/blue subgraphs contain no copy of~$S_{2,2,1}$.

\begin{lemma} \label{lem:mp_avoid_const_S221}
   $\{S_{2,2,1}, S_{2,2,1}\}$ is avoidable in $\KMP$. Furthermore, if $H, F$ are two graphs such that both have a subgraph isomorphic to $S_{2,2,1}$, then $\{H, F\}$ is avoidable in $\KMP$.
\end{lemma}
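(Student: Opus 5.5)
The plan is to use the family $\{B^{(2)}_p \colon p \geq 1\}$ of Figure~\ref{fig:c3_mp_infty} as the witness of avoidability. Since the sizes $p+2$ are unbounded, it suffices to show that for every $p$ neither colour class contains $S_{2,2,1}$. I read the construction as the join $K_2 + P_p$: poles $u,v$ with $uv$ an edge, a path $x_1 x_2 \cdots x_p$, and every $x_i$ adjacent to both $u$ and $v$. This is maximal planar, since it has $1 + 2p + (p-1) = 3p = 3(p+2)-6$ edges. The colouring consistent with the stated properties (blue triangle-free, red acyclic) is:
\begin{itemize}
\item \emph{red}: the path edges $x_ix_{i+1}$ together with $uv$;
\item \emph{blue}: all edges $ux_i$ and $vx_i$.
\end{itemize}
Then the blue graph is $K_{2,p}$ and the red graph is $P_p \cup K_2$. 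If the figure's colouring differs, the same structural check below adapts.

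For the red graph the check is immediate. $S_{2,2,1}$ has a vertex of degree $3$, while the red graph $P_p \cup K_2$ has maximum degree $2$. So there is no red $S_{2,2,1}$.

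For the blue graph $K_{2,p}$ with parts $\{u,v\}$ and $\{x_1,\dots,x_p\}$, I argue by the position of the centre $c$ of a putative copy of $S_{2,2,1}$. The centre needs degree $3$, and every $x_i$ has blue degree $2$, so $c \in \{u,v\}$; by symmetry take $c = u$. The three neighbours of $u$ in the copy are then some $x_a, x_b, x_d$. Each of the two legs of length two continues from its $x$-vertex to a neighbour other than $u$. The only such neighbour of any $x_i$ is $v$. Hence both length-two legs would have to end at $v$, contradicting that the legs of a spider are vertex-disjoint. So there is no blue $S_{2,2,1}$, and $\{S_{2,2,1}, S_{2,2,1}\}$ is avoidable in $\KMP$.

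The second sentence of the lemma then follows from Observation~\ref{obs:mop_monotonicity}(i) with $H^* = F^* = S_{2,2,1}$. The only genuine obstacle is confirming that the figure's colouring is the one described; the degree and leg-counting argument itself is routine.
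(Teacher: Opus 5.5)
Your proof is correct and takes the same approach as the paper, which uses the family $B^{(2)}_p$ of Figure~\ref{fig:c3_mp_infty}, merely asserts that neither colour class contains $S_{2,2,1}$, and then applies monotonicity. Your degree argument for the red class and leg-counting argument for the blue $K_{2,p}$ supply the verification the paper omits, and because you define the coloured graphs $K_2 + P_p$ explicitly, your argument holds without relying on how the figure is drawn.
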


\subsection{The diagonal case: unavoidable \texorpdfstring{$\{H, H\}$}{(H,H)} in \texorpdfstring{$\KMP$}{KMP} for connected $H$}

In this section we will prove the complete characterisation given in Theorem \ref{thm:diag_H_conn_mp} of the connected graphs $H$ such that $\{H, H\}$ is unavoidable in $\KMP$. We begin by noting the following immediate consequence of Lemmas \ref{lem:mp_avoid_const_A} and \ref{lem:mp_avoid_const_B}, together with Remark \ref{rem:mp_2P3}.

\begin{proposition} \label{prop:forst_mp}
    If $H$ is a graph such that $\{H, H\}$ is unavoidable in $\KMP$, then $H$ is a forest such that: (i) at most one component has order greater than three and (ii) every component has diameter at most five.
\end{proposition}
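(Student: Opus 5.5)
The plan is to argue by contraposition: assume $H$ violates one of the three conclusions (forest, the component condition, the diameter condition), find a subgraph of $H$ covered by Lemma~\ref{lem:mp_avoid_const_A} or Lemma~\ref{lem:mp_avoid_const_B}, and conclude that $\{H,H\}$ is avoidable in $\KMP$. Throughout, take $F = H$ in those lemmas, so that the hypothesis ``both graphs contain the given subgraph'' only needs checking for $H$ itself.

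\emph{Step 1: $H$ is a forest.} Suppose $H$ contains a cycle $C_s$.
\begin{itemize}
\item If $s \geq 4$, then both $H$ and $F = H$ contain $C_s$. The ``furthermore'' part of Lemma~\ref{lem:mp_avoid_const_A} then says $\{H,H\}$ is avoidable.
\item If $s = 3$, then one graph contains $C_3$ and the other contains $C_k$ with $k = 3$. Lemma~\ref{lem:mp_avoid_const_B} then gives avoidability.
\end{itemize}
Hence $H$ is acyclic.

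\emph{Step 2: the component condition (i).} Suppose two components of $H$ each have order at least three; this covers the case in (i), where they have order greater than three. Each such component is a tree with at least three vertices, so it contains a path $P_3$. Taking one $P_3$ from each component gives a copy of $2P_3$ in $H$, as noted in Remark~\ref{rem:mp_2P3}. Lemma~\ref{lem:mp_avoid_const_A} then shows $\{H,H\}$ is avoidable. So at most one component of $H$ has order at least three, which is stronger than (i).

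\emph{Step 3: the diameter condition (ii).} Suppose some component of $H$ has diameter $d \geq 5$. A shortest path realising the diameter is a copy of $P_{d+1}$, and $d+1 \geq 6$. By Remark~\ref{rem:mp_2P3}, $P_k$ contains $2P_3$ for all $k \geq 6$, so $H$ contains $2P_3$. Lemma~\ref{lem:mp_avoid_const_A} again gives avoidability. Thus every component has diameter at most $4$, which certainly implies at most five.

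There is no real obstacle here. The only care needed is in checking that each lemma's hypotheses hold with $F = H$, and in particular that the triangle case in Step 1 is handled by Lemma~\ref{lem:mp_avoid_const_B} with $k = 3$ rather than by Lemma~\ref{lem:mp_avoid_const_A}.
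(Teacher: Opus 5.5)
Your proof is correct and follows the paper's route exactly. The paper states the proposition as an immediate consequence of Lemmas~\ref{lem:mp_avoid_const_A} and~\ref{lem:mp_avoid_const_B} together with Remark~\ref{rem:mp_2P3}, and you spell out precisely those applications (triangles via Lemma~\ref{lem:mp_avoid_const_B} with $k=3$, longer cycles and $2P_3$ via Lemma~\ref{lem:mp_avoid_const_A}); you are also right that the argument actually gives the sharper conclusions of at most one component of order at least three and diameter at most four.
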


We first prove that $\{K_{1, 4}, K_{1, 4}\}$ is avoidable in $\KMP$, before proceed to show that $\{P_5, P_5\}$ and $\{S_{2,1,1}, S_{2,1,1}\}$ are both unavoidable in $\KMP$.

\begin{proposition} \label{prop:k1_4}
    $\{K_{1, 4}, K_{1, 4}\}$ is avoidable in $\KMP$.
\end{proposition}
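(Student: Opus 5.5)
The plan is to exhibit an infinite family of maximal planar graphs, each with a red/blue edge-colouring in which every vertex has at most three red and at most three blue incident edges. Neither colour class then contains $K_{1,4}$. For this it suffices to find infinitely many maximal planar graphs with $\Delta \leq 6$, together with a ``balanced'' $2$-edge-colouring of each.

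\textbf{Step 1 (the graphs).} Let $T_k$ be the geodesic icosahedral triangulation obtained by subdividing each face of the icosahedron into $k^2$ triangles, using the triangular grid of side length $k$.
\begin{itemize}
\item $T_k$ is a maximal planar graph on $10k^2+2$ vertices.
\item The twelve original vertices have degree $5$, and every other vertex has degree $6$.
\item Hence $\Delta(T_k)=6$, and $\{T_k \colon k\ge 1\}$ is infinite.
\end{itemize}
If an explicit picture is preferred, any family of triangulations with all degrees in $\{5,6\}$ would do equally well. I would just cite this standard construction rather than draw it.

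\textbf{Step 2 (the colouring).} Use the Euler-tour trick.
\begin{itemize}
\item Add a new vertex $w$ adjacent to every odd-degree vertex of $T_k$, namely the twelve degree-$5$ vertices. Every degree in $T_k+w$ is now even (and $\deg w = 12$), and $T_k+w$ is connected.
\item Take an Eulerian circuit starting and ending at $w$, and colour its edges alternately red and blue.
\item Each passage of the circuit through a vertex $v\ne w$ uses two consecutive edges, one red and one blue. So in $T_k+w$ every $v\ne w$ has equally many red and blue incident edges. Any parity mismatch from the circuit having odd length can occur only at $w$.
\item Restrict the colouring to $T_k$ by deleting $w$.
\end{itemize}

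\textbf{Step 3 (degree check).} Check the colour degrees in $T_k$.
\begin{itemize}
\item A degree-$6$ vertex has exactly $3$ red and $3$ blue edges in $T_k$.
\item A degree-$5$ vertex had $3$ red and $3$ blue edges in $T_k+w$. It loses one of them when $w$ is deleted, so it keeps at most $3$ of each colour.
\end{itemize}
So each colour class has maximum degree at most $3$, and contains no $K_{1,4}$. Since this holds for every $k$, the pair $\{K_{1,4},K_{1,4}\}$ is avoidable in $\KMP$.

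The only genuine care needed is in Step 2: the parity issue at the start vertex of the circuit must fall on the auxiliary vertex $w$, not on a vertex of $T_k$. Starting the circuit at $w$ handles this. Beyond that, the one remaining point is to make sure the chosen family really has maximum degree $6$ and is infinite, which the geodesic triangulations guarantee.
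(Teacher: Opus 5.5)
Your proof is correct, but it follows a genuinely different route from the paper.

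The paper builds an explicit family of nested-triangle triangulations with $\Delta = 6$. It checks from the figures that each one is Vizing Class~1, then merges the colour classes $\{1,2,3\}$ into blue and $\{4,5,6\}$ into red. It needs a hand-made family because Vizing's planar graph conjecture is open for $\Delta = 6$.

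You notice that a proper $6$-edge-colouring is much more than required. All that is needed is a red/blue colouring in which every colour degree is at most $3$. The Euler-tour balancing argument gives this for any connected graph with $\Delta \le 6$, provided the parity defect at the start of the circuit is placed on an auxiliary vertex. When all degrees are even, it can instead go on a vertex of degree at most $4$, which always exists in a triangulation with $\Delta \le 6$.

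Your individual checks are sound. $T_k$ is a simple triangulation on $10k^2+2$ vertices, with twelve vertices of degree $5$ and all others of degree $6$. $T_k+w$ is connected and Eulerian, and every $v \neq w$ splits evenly. In fact $|E(T_k+w)| = 30k^2+12$ is even, so $w$ is balanced too and the parity issue never actually arises.

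What your approach gains is generality and independence from pictures. It shows that \emph{every} infinite family of maximal planar graphs with $\Delta \le 6$ witnesses the avoidability of $\{K_{1,4},K_{1,4}\}$, including the paper's own nested-triangle family, without exhibiting any colouring by hand. The paper's route proves the stronger but unnecessary Class~1 property, and its correctness rests on checking the inductive extension pattern drawn in its figures.
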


\begin{proof}
It suffices to construct an infinite sequence $G_i$ of maximal planar graphs with $\Delta = 6$ and edge-chromatic number $\Delta$: For any $6$-edge-colouring of each $G_i$ using the colours $[6]$, if we re-colour all the edges with a colour in $\{1, 2, 3\}$ as blue and all the edges with colour in $\{4, 5, 6\}$ as red, then after re-colouring every vertex of degree at least $4$ has at most three edges of the same colour and hence there is no monochromatic $K_{1, 4}$.

\begin{figure}[htbp!]
	   \ctikzfig{figures/k1_4_inf_6colouring}
	   \caption{Construction of an infinite family of maximal planar graphs with $\Delta = 6$ and edge-chromatic number $\Delta$ (Vizing Class 1).}
       \label{fig:k1_4_inf_6colouring}
\end{figure}

We next describe one such infinite family of graphs. Consider a triangle formed by the vertices $\{v_0, u_0, w_0\}$, and add another triangle formed by the vertices $\{v_1, u_1, w_1\}$ inside the initial triangle formed by $\{v_0, u_0, w_0\}$; connect these two triangles as shown in Figure~\ref{fig:k1_4_inf_6colouring}~(a) and colour the edges accordingly. Next, inside the triangle formed by $\{v_1, u_1, w_1\}$, add another triangle formed by the vertices $\{v_2, u_2, w_2\}$ and connect the two triangles as shown in Figure~\ref{fig:k1_4_inf_6colouring}~(b) and colour the edges accordingly.

Let the resulting coloured graph be called $G_2$. Clearly it is maximal planar, has $\Delta = 6$ (namely the vertices $v_1, u_1, w_1$), and edge-chromatic number $\Delta$. 

Let $i \geq 3$ be an integer. Observe that, any arrangement as in Figure~\ref{fig:k1_4_inf_6colouring}~(a) can be extended by adding a triangle on the inner-most face, then connecting it and colouring all new edges as in Figure~\ref{fig:k1_4_inf_6colouring}~(b). 

Furthermore, ignoring the inner triangle formed by the vertices $\{v_i, u_i, w_i\}$ in Figure~\ref{fig:k1_4_inf_6colouring}~(a) and the edges incident to them, the edges incident to $\{v_{i+1}, u_{i+1}, w_{i+1}\}$ in Figure~\ref{fig:k1_4_inf_6colouring}~(b) share the same colouring as the edges incident to $\{v_{i-1}, u_{i-1}, w_{i-1}\}$ in Figure~\ref{fig:k1_4_inf_6colouring}~(a). Hence any arrangement as in Figure~\ref{fig:k1_4_inf_6colouring}~(b) can also be extended by adding a triangle on the inner-most face, then connecting it and colouring all new edges as in Figure~\ref{fig:k1_4_inf_6colouring}~(a).

Every such extension results in another maximal planar graph with $\Delta = 6$ and edge-chromatic number $\Delta$. Therefore, if $i$ is odd then let $G_i$ be the extension of $G_{i - 1}$ by adding a triangle on the inner-most face as in Figure~\ref{fig:k1_4_inf_6colouring}~(a); else if $i$ is even then let $G_i$ be the extension of $G_{i - 1}$ by adding a triangle on the inner-most face as in Figure~\ref{fig:k1_4_inf_6colouring}~(b). The resulting family of graphs satisfies all desired properties.
\end{proof}

\begin{remark}
    Vizing proved in \cite{Vizing1965} that every planar graph with $\Delta \geq 8$ has edge-chromatic number $\Delta$ (i.e. is of Vizing Class 1), and further conjectured that every planar graph with $6 \leq \Delta \leq 7$ is of Vizing Class 1. The conjecture has been proved by Zhang for $\Delta = 7$ in \cite{Zhang2000}, but it remains wide open for $\Delta = 6$. This necessitates our construction in the proof of Proposition \ref{prop:k1_4}. 
\end{remark}

In order to prove that $\{P_5, P_5\}$ is unavoidable in $\KMP$, we use the following lemma.

\begin{lemma} \label{lem:wk_mono_p5}
Let $k \geq 5$ be an integer. Every $2$-edge-colouring of the wheel graph $W_k$ has a monochromatic $P_5$.
\end{lemma}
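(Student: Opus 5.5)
The plan is to argue by contradiction, with a case analysis driven by the colours of the spokes. Let the rim be $v_1,\dots,v_k$ (indices mod $k$) with hub $c$, and fix a red/blue edge-colouring of $W_k$ with no monochromatic $P_5$. Let $R$ (resp.\ $B$) be the set of rim vertices whose spoke to $c$ is red (resp.\ blue). Since $k \geq 5$, one class has at least three vertices; by symmetry of colours assume $|R| \geq 3$.

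\emph{Step 1: red rim edges near $R$ are scarce.} Any red rim edge $xw$ with $x \in R$ extends a red $P_4$ of the form $w\,x\,c\,y$, where $y \in R \setminus \{x,w\}$ exists because $|R|\ge 3$. This $P_4$ becomes a red $P_5$ as soon as $y$ (or $w$) has a further red edge to a vertex outside $\{w,x,c,y\}$. From this I would derive structural constraints.
\begin{enumerate}[(a)]
\item Two red rim edges $xw$ and $yz$ with $x,y\in R$ and $\{x,w\}\cap\{y,z\}=\emptyset$ give the red $P_5$ $w\,x\,c\,y\,z$. Hence all red rim edges incident to $R$ pairwise intersect, so they form a star or a triangle; on a cycle of length $\ge 5$ this means at most two such edges, sharing a common rim vertex.
\item If $|R| \geq 4$, the same argument, using the extra vertex of $R$ as an endpoint, should show that there is at most one red rim edge touching $R$, and usually none.
\end{enumerate}

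\emph{Step 2: find a blue $P_5$.} Once red rim edges touching $R$ are confined to at most two consecutive edges, I would exhibit a blue $P_5$.
\begin{enumerate}[(a)]
\item If $B=\emptyset$, every rim edge touches $R$, so at most two consecutive rim edges are red. The remaining blue rim edges form a path on at least $k-1\ge 4$ edges, which is a blue $P_5$.
\item If $B\neq\emptyset$, rim edges with both ends in $B$ are unconstrained by Step 1. Here I would use blue spokes $cb$ for $b\in B$: a blue rim path through $B$ can be extended via $c$ to another blue spoke, and since $|B|\geq 1$ this combines a blue rim path of length $\ge 2$ with a blue spoke to produce $P_5$. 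When $|B|\ge 3$, the argument is symmetric to Step 1 with the colours swapped, so the rim edges touching $B$ are mostly red. Combining this with Step 1 forces the rim edges between $R$ and $B$ to be both mostly blue and mostly red, which is contradictory for $k\ge 5$.
\end{enumerate}

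\emph{Main obstacle.} The hard part is the bookkeeping in the mixed case $|R|\ge 3$, $1\le |B|\le 2$, where the two sides interact at the boundary rim edges and small configurations could slip through. I would handle it by fixing the cyclic positions of the at most two $B$-vertices, adjacent or not, and checking directly which rim-edge colourings avoid both a red $P_5$ (via Step 1) and a blue $P_5$ through $c$. If the case analysis for general $k$ becomes unwieldy, the fallback is to verify $k=5,6$ (and perhaps $7$) by the exhaustive search of Appendix~\ref{sec:code}. For larger $k$, the rim then has so many edges touching $R$ that Step 1 alone leaves a blue rim path on at least $5$ vertices.
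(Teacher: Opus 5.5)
Your Step 1(a) is correct, and it is exactly the first sub-case of the paper's Cases 1 and 2. However, it is the only mechanism you actually establish, and several later claims do not follow from it.

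Step 1(b) is false as stated. Take $k=6$ with rim $a,u,b,y_1,y_2,y_3$, red spokes at $u,y_1,y_2,y_3$, blue spokes at $a,b$, red rim edges $au,ub$ and all other rim edges blue. Two red rim edges touch $R$, yet the red graph is a double star with no $P_5$. The extra-vertex argument only works when an \emph{endpoint} of the red rim path $a,u,b$ lies in $R$, via $a,u,b,c,y$. Step 2(a) silently needs that version, since two red rim edges leave only $k-2$ blue ones, which is a $P_4$ when $k=5$.

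More seriously, the contradiction you claim in Step 2(b) does not exist. Take $k=6$ with red spokes at $v_1,v_2,v_3$, blue spokes at $v_4,v_5,v_6$, rim edges $v_1v_2,v_2v_3,v_6v_1$ blue and $v_3v_4,v_4v_5,v_5v_6$ red. The only red rim edge touching $R$ is $v_3v_4$ and the only blue rim edge touching $B$ is $v_6v_1$, so Step 1 and its colour-swapped mirror both hold. The monochromatic $P_5$'s there are, for instance, $v_6,v_5,v_4,v_3,c$ and $v_3,v_2,v_1,v_6,c$: a monochromatic rim path that enters the hub through a spoke of its own colour. That is the missing idea, and your plan never uses it.

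The same idea is needed in the case you flag as the main obstacle, and your fallback does not cover it. Take $k=8$, rim $b_1,b_2,r_1,\dots,r_6$, blue spokes exactly at $b_1,b_2$, rim edges $b_1b_2$ and $r_3r_4$ red, and all other rim edges blue. This satisfies everything Step 1 asserts, even your (b), yet the longest blue rim paths have only four vertices. There is no red $P_5$, and the monochromatic $P_5$'s are blue ones through $c$, such as $r_3,r_2,r_1,b_2,c$ and $r_1,b_2,c,b_1,r_6$. So ``Step 1 alone'' does not finish $k\ge 8$, and checking $k\le 7$ by computer does not close the proof.

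The paper organises its cases by a longest monochromatic rim segment (on four, three or two vertices) rather than by spoke classes. For $k\ge 6$ such a segment is flanked by two disjoint rim edges of the opposite colour. Either at each end some spoke at the flanking edge has the flanking colour, giving your Step 1(a) path; or at one end both spokes have the segment's colour and attach the segment to $c$, again giving a $P_5$. The alternating rim is handled by pigeonhole on the spokes, and $k=5$ by hand. To rescue your route you would need to add this segment-plus-spoke extension as a second basic tool and redo the mixed-case analysis with it.
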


\begin{proof}
    We will use red/blue for the $2$-edge-colouring. The case when $k = 5$ can be readily verified by case analysis. We will prove the case when $k \geq 6$. Let $v_1, \dots, v_k$ be the vertices on the outer cycle of $W_k$ and let $c$ be the centre vertex. 

    If there is a monochromatic path on five or more vertices on the outer cycle of $W_k$, then we are done. Suppose otherwise, i.e. every monochromatic path on the outer cycle has at most four vertices.

    \case{1} We first consider the case when the longest monochromatic path on the cycle is on four vertices, and is coloured say blue. Then there must be some $i$ such that the edge $\{v_{i-1}, v_i\}$ is red, $v_i, v_{i+1}, v_{i+2}, v_{i+3}$ is a blue path and the edge $\{v_{i+3}, v_{i+4}\}$ is red, such that the two red edges are disjoint (since $k \geq 6$). If at least one of the edges $\{c, v_i\}$ or $\{c, v_{i-1}\}$ is red, and at least one of the edges of the edges $\{c, v_{i+3}\}$ or $\{c, v_{i+4}\}$ is red, then we have a red $P_5$ (e.g. if $\{c, v_{i-1}\}$ and $\{c, v_{i+3}\}$ are red, then $v_i, v_{i-1}, c, v_{i+3}, v_{i+4}$ is a red $P_5$). Otherwise, either $\{c, v_{i-1}\}$ and $\{c, v_i\}$ are blue and we have a blue $P_5$ (e.g. the path $c, v_i, v_{i+1}, v_{i+2}, v_{i+3}$), or $\{c, v_{i+3}\}$ and $\{c, v_{i+4}\}$ are blue and we have a blue $P_5$.

    \case{2} We next consider the case when the longest monochromatic path on the cycle is on three vertices, and is coloured say blue. Then there must be some $i$ such that the edge $\{v_{i-1}, v_i\}$ is red, $v_i, v_{i+1}, v_{i+2}$ is a blue path and the edge $\{v_{i+2}, v_{i+3}\}$ is red, such that the two red edges are disjoint (since $k \geq 6$). If at least one of the edges $\{c, v_i\}$ or $\{c, v_{i-1}\}$ is red, and at least one of the edges of the edges $\{c, v_{i+2}\}$ or $\{c, v_{i+3}\}$ is red, then we have a red $P_5$ (e.g. if $\{c, v_{i-1}\}$ and $\{c, v_{i+2}\}$ are red, then $v_i, v_{i-1}, c, v_{i+2}, v_{i+3}$ is a red $P_5$). Otherwise, either $\{c, v_{i-1}\}$ and $\{c, v_i\}$ are blue and we have a blue $P_5$ (namely the path $v_{i-1}, c, v_i, v_{i+1}, v_{i+2}$), or $\{c, v_{i+2}\}$ and $\{c, v_{i+3}\}$ are blue and we have a blue $P_5$.

    \case{3} Otherwise the longest monochromatic path on the cycle is on three vertices, i.e. the edges on the outer cycle of $W_k$ are coloured alternating red and blue, where $k$ must be even. By the pigeon-hole principle, at least $k/2$ edges incident to the centre $c$ have the same colour, say red.

    \case{3.1} Suppose two red edges from the centre are incident to $v_i$ and $v_{i+1}$. If the colour of the edge $\{v_i,v_i+1\}$ is blue, then the edges $\{v_{i-1}, v_i\}$ and $\{v_{i+1}, v_{i+2}\}$ are red, and hence $v_{i-1}, v_i, c, v_{i+1}, v_{i+2}$ is a red $P_5$. Otherwise, the edge $\{v_i,v_i+1\}$ is red. Since $k \geq 6$ then $k/2 \geq 3$ and the centre $c$ has a third red edge incident to some $v_j$ distinct from $v_i$ and $v_{i + 1}$; then one of the edges incident with $v_j$ is red and we get red $P_5$ (e.g. if $\{v_{j - 1}, v_j\}$ is red then $v_{j - 1}, v_j, c, v_i, v_{i+1}$ is a red $P_5$).
 
    \case{3.2} Otherwise suppose that no two red edges from the centre $c$ are incident to consecutive vertices of the rim. But this is possible only if the only if the edges incident to $c$ are coloured alternating red and blue. Without loss of generality, suppose that the edges $\{c, v_1\}$, $\{c, v_3\}$ and $\{c, v_5\}$ are coloured red. Then the alternating colouring of the outer cycle of $W_k$ forces a red edge incident to $v_1$ (either $\{v_k, v_1\}$ or $\{v_1, v_2\}$), and a red edge incident to $v_5$ (either $\{v_4, v_5\}$ or $\{v_5, v_6\}$). Since $k \geq 6$ these two red edges on the outer cycle incident to $v_1$ and $v_5$ are distinct, and together with the edges $\{c, v_1\}$ and $\{c, v_5\}$ for a red $P_5$.

    The result follows.
\end{proof}

We are now in a position to prove that $\{P_5, P_5\}$ is unavoidable in $\KMP$. We will, in fact, by means of the `rim-trick' (Theorem \ref{thm:rim-trick-cycle}) prove that $\{P_5 \cup t K_2, P_5 \cup t K_2\}$ is unavoidable in $\KMP$ for every integer $t \geq 0$.

\begin{theorem} \label{thm:mp_p5}
    For every integer $t \geq 0$, $\{P_5 \cup t K_2, P_5 \cup t K_2\}$ is unavoidable in $\KMP$. In particular, $\MP{P_5}{P_5} = 6$ and $\MP{P_5 \cup t K_2}{P_5 \cup t K_2} \leq (4t^2 + 16t + 26)^{\log_2 3}$.
\end{theorem}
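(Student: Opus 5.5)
The plan has three stages: settle $\MP{P_5}{P_5}=6$, prove that $\{P_5,P_5\}$ is unavoidable for all $n\ge 6$, and then apply the rim-trick twice.

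\textbf{Lower bound $\MP{P_5}{P_5}\ge 6$.} I would exhibit a red/blue edge-colouring of some maximal planar graph on $5$ vertices with no monochromatic $P_5$. The only such graph is $K_5$ minus an edge, with $9$ edges. A natural choice is to decompose it into two monochromatic subgraphs that are both $P_5$-free, for example a red triangle plus a pendant structure and a blue star-like graph. I would find this by a short case check, or by citing the computer search of Appendix~\ref{sec:code}.

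\textbf{Unavoidability for $n\ge 6$.} Let $G$ be maximal planar on $n\ge 6$ vertices. Then $\delta(G)\ge 3$, and since the average degree is below $6$, some vertex $v$ has degree $d\le 5$. Because $G$ is a triangulation with $n\ge 4$, the neighbourhood of every vertex induces a cycle, so $G$ contains the wheel $W_{\deg(v)}$ centred at $v$. If some vertex has degree at least $5$, Lemma~\ref{lem:wk_mono_p5} applied to the wheel $W_{\deg}\subseteq G$ around it gives a monochromatic $P_5$. So the only remaining case is that every vertex has degree $3$ or $4$. Degree sum $6n-12\le 4n$ forces $n\le 6$, hence $n=6$ and $G$ is $4$-regular, i.e.\ $G$ is the octahedron. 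For the octahedron I would check directly (by hand or by the computer search) that every $2$-edge-colouring has a monochromatic $P_5$. This handles every $n\ge 6$ at once, so no induction is needed; this is just as well, since deleting a vertex does not preserve maximal planarity.

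\textbf{Adding the matchings.} Theorem~\ref{thm:mp_longest_cycle} gives a cycle of length at least $n^{\log_3 2}$; note the exponent is inverted relative to the statement, so one would require $n\ge C^{\log_2 3}$. Apply Theorem~\ref{thm:rim-trick-cycle} with $H=P_5$ and $F=P_5$. This needs a cycle of length $(2t+4)\cdot 4+10=8t+26$ and yields $\{P_5\cup tK_2,P_5\}$. Swap the colours and apply it again with $H=P_5$ and $F=P_5\cup tK_2$, which is a linear forest with $|F|=2t+5$. This needs a cycle of length $(2t+4)(2t+4)+10=4t^2+16t+26$, which dominates $8t+26$. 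Hence $n\ge(4t^2+16t+26)^{\log_2 3}$ suffices.

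\textbf{Main obstacle.} I expect the hardest part to be the small verifications: the $5$-vertex avoiding colouring and the octahedron case. Everything else is bookkeeping.
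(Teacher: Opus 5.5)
Your proposal is correct and follows the paper's proof. A vertex of degree at least $5$ gives a wheel $W_k$ with $k\ge 5$ as a subgraph; its neighbourhood contains a spanning cycle but need not induce one, which is harmless since Lemma~\ref{lem:wk_mono_p5} only needs the subgraph. The rim-trick is then applied twice with cycle lengths $8t+26$ and $4t^2+16t+26$, converted through Theorem~\ref{thm:mp_longest_cycle} exactly as you do.

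The only real difference is at $n=6$, where the paper relies on an exhaustive computer search. Your count $6n-12\le 4n$ instead isolates the octahedron, which is easy by hand: a $P_5$-free subgraph of the octahedron has at most $6$ edges, with equality only for two disjoint triangles or for $K_4$ minus an edge together with a disjoint edge, and the complement of either contains a $P_5$. The lower bound is witnessed by colouring all edges at one degree-$4$ vertex of $K_5$ minus an edge blue and the rest red.
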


\begin{proof}
    Figure \ref{fig:appendixB4} in Appendix \ref{sec:small_cexs} illustrates a maximal planar graph on $n = 5$ vertices with no monochromatic $P_5$. By an exhaustive computer search, as outlined in Appendix \ref{sec:code}, one can verify that $n = 6$ is the smallest $n$ such that every $2$-edge-colouring of every maximal planar graph on $n$ vertices contains a monochromatic copy of $P_5$. 
    
    Hence~$\MP{P_5}{P_5} \geq 6$. Let $G$ be a maximal planar graph on $n \geq 7$ vertices. Then $G$ has a vertex $v$ of degree at least $5$, and the subgraph induced by the closed neighbourhood $N[v]$ is isomorphic to the wheel graph $W_k$ for some $k \geq 5$. By Lemma \ref{lem:wk_mono_p5}, every $2$-edge-colouring of $N[v]$ contains a monochromatic $P_5$, and hence every $2$-edge-colouring of $G$ contains a monochromatic $P_5$. Hence $\{P_5, P_5\}$ is unavoidable in $\KMP$, with $\MP{P_5}{P_5} = 6$.

    Fix an integer $t \geq 1$. We next prove that $\{P_5 \cup t K_2, P_5\}$ is unavoidable in $\KMP$. Let $G$ be a maximal planar graph on $n \geq (8t + 26)^{\log_2 3}$ vertices. By Theorem \ref{thm:mp_longest_cycle}, $G$ contains a cycle of length $\geq 8t + 26 = (2t-1+|P_5|)(|P_5|-1) + 2|P_5|$. Moreover, as $P_5$ is a linear forest and $G$ either has a red $P_5$ or a blue $P_5$, this means we can apply Theorem \ref{thm:rim-trick-cycle} to conclude that every red/blue edge-colouring of $G$ graph has a red $P_5 \cup t K_2$ or a blue $P_5$.
    
    Now, let $H$ be a maximal planar graph on $n' \geq (4t^2 + 16t + 26)^{\log_2 3} > (8t + 26)^{\log_2 3}$ vertices. By the previous remark with the colours flipped, $H$ either has a red $P_5$ or a blue $P_5 \cup t K_2$. Also, by Theorem \ref{thm:mp_longest_cycle}, $H$ contains a cycle of length  $\geq 4t^2 + 16t + 26 = (2t-1+|P_5|)(|P_5 \cup t K_2| - 1) + 2 |P_5|$. Noting that $P_5 \cup t K_2$ is a linear forest, we can therefore apply Theorem \ref{thm:rim-trick-cycle} again to obtain that every red/blue colouring of $G$ graph has a monochromatic $P_5 \cup t K_2$.
\end{proof}

\begin{remark}
    By monotonicity, if $H$ and $F$ are both subgraphs of $P_5 \cup t K_2$ for some integer $t \geq 0$, then $\{H, F\}$ are unavoidable in $\KMP$ as a consequence of Theorem \ref{thm:mp_p5}.
\end{remark}

Lastly, we prove that $\{S_{2,1,1}, S_{2,1,1}\}$ is also unavoidable in $\KMP$.

\begin{theorem} \label{thm:mp_fork}
    $\{S_{2,1,1}, S_{2,1,1}\}$ is unavoidable in $\KMP$ and $\MP{S_{2,1,1}}{S_{2,1,1}} = 9$.
\end{theorem}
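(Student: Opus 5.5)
The proof has two parts: a lower bound $\MP{S_{2,1,1}}{S_{2,1,1}} \geq 9$ and an upper bound showing that every red/blue edge-colouring of every maximal planar graph on $n \geq 9$ vertices contains a monochromatic fork. This follows the pattern of Theorem \ref{thm:mp_p5}.

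\textbf{Lower bound.} We exhibit a maximal planar graph on $8$ vertices, with a red/blue edge-colouring, that has no monochromatic $S_{2,1,1}$. We expect to find it by the exhaustive computer search of Appendix \ref{sec:code} and to record it in Appendix \ref{sec:small_cexs}. The same search will confirm that every $2$-edge-colouring of every maximal planar graph on exactly $9$ vertices contains a monochromatic fork.

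\textbf{Upper bound.} Since $\KMP$ is not closed under deleting vertices, induction is unavailable, and we need a local argument that works for all large $n$. The plan is the following.

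First, we find a local configuration $X$ that every maximal planar graph on $n \geq 9$ vertices contains. Every such graph has minimum degree at least $3$ and average degree below $6$. Candidate configurations are:
\begin{itemize}
\item a vertex $v$ of degree at least $6$, so that $N[v]$ contains a wheel $W_k$ with $k \geq 6$;
\item failing that, a dense structure of degree-$5$ vertices, such as two adjacent vertices of degree $5$ whose wheels overlap in a triangle strip.
\end{itemize}
We then prove a lemma analogous to Lemma \ref{lem:wk_mono_p5}: every $2$-edge-colouring of $X$ contains a monochromatic $S_{2,1,1}$.

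For the wheel $W_k$ with $k \geq 6$, the case analysis runs as follows. By pigeonhole, the centre $c$ has at least $3$ spokes of one colour, say red, going to $v_a, v_b, v_d$. A red fork centred at $c$ needs only one further red edge at some $v_a$ not going back into the star. So if any rim edge at $v_a$, $v_b$ or $v_d$ is red and leads away from $c$, we are done, since $c$ has degree $3$ in the red star plus a path of length two. Otherwise all rim edges at these three spokes' endpoints are blue. These blue rim edges, combined with the blue spokes and the remaining rim edges, should then produce a blue fork: a rim vertex with two blue rim edges and a blue spoke, extended along the rim. If $c$ has at least $4$ spokes of one colour, the endpoints are forced closer together and the argument only gets easier.

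The main obstacle is that $W_5$ alone likely admits a fork-free colouring; otherwise the value would be smaller than $9$. So we cannot simply take a high-degree vertex when the graph is $5$-regular-ish, for example in the icosahedron or near-$5$-regular graphs on $9$ or more vertices. If so, we handle graphs with maximum degree $5$ separately. By Euler's formula there are at least $12$ vertices of degree at most $5$, but graphs with $\Delta \leq 5$ are finite in number, since $3n-6 \leq 5n/2$ forces $n \leq 12$. Those can be checked by computer. Every graph on $n \geq 13$ vertices has a vertex of degree at least $6$, so the wheel lemma finishes the proof there, and the computer search covers $9 \leq n \leq 12$.
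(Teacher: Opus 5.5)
\textbf{The wheel lemma you rely on is false, for every $k$.} Your lower bound and your overall split (a local argument when $\Delta\ge 6$, a finite computer check when $\Delta\le 5$, which forces $n\le 12$) match the paper. The gap is in the local argument. Colour every spoke of $W_k$ red and every rim edge blue. The red graph is the star $K_{1,k}$, which contains no $P_4$ and hence no fork. The blue graph is the cycle $C_k$, which has maximum degree $2$ and hence no fork.

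Your case analysis fails exactly at the step ``these blue rim edges, combined with the blue spokes \dots\ should then produce a blue fork''. When all spokes are red there are no blue spokes, and the blue edges of the wheel all lie on the rim. Your remark that more spokes of one colour ``only gets easier'' is backwards: all spokes of one colour is precisely the obstruction. It is the same obstruction that already defeats $W_5$, so moving to $k\ge 6$ gains nothing as long as you stay inside the wheel.

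\textbf{The missing idea is to use an edge of $G$ that is not a wheel edge.} The paper notes that at least three rim vertices around $c$ have degree at least $4$ in $G$; indeed, two consecutive rim vertices cannot both have degree $3$ when $k\ge 4$. Such a $v_j$ therefore has a fourth neighbour $x$. The paper sorts the colourings of the wheel at $c$ into three kinds.
\begin{enumerate}[(i)]
\item Colourings with a monochromatic fork centred at $c$.
\item Colourings in which some rim vertex has all its wheel edges the same colour. These are shown to give a fork using $\deg(c)\ge 6$.
\item The remaining colourings. These are forced to be close to the counterexample above: up to swapping colours, there are at most two red spokes (consecutive if two) and at most one blue rim edge.
\end{enumerate}
In case (iii), pick a rim vertex $v_j$ of degree at least $4$ whose spoke is blue and whose nearby rim edges are red. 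If $v_jx$ is red, then together with the two red rim edges at $v_j$ and a further red rim edge it forms a red fork centred at $v_j$. If $v_jx$ is blue, then $c$--$v_j$--$x$ together with two other blue spokes is a blue fork centred at $c$. This settles every maximal planar graph with $\Delta\ge 6$, whatever $n$ is.

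As a smaller point, ``induction is unavailable'' is too strong. Deleting a degree-$3$ vertex from a maximal planar graph on at least $5$ vertices leaves a maximal planar graph. The paper uses this for $10\le n\le 12$ when $\delta=3$, so the computer only has to check the two graphs with $4\le\delta\le\Delta\le 5$.
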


\begin{proof}
    Let $G$ be a maximal planar graph with a vertex $c$ of degree at least $6$; then the subgraph induced by the closed neighbourhood $N[c]$ is isomorphic to the wheel graph $W_k$ for some $k \geq 6$. Furthermore, at least three neighbours of $c$ must have degree at least $4$. Let $v_1, \dots, v_k$ be the neighbours of $c$. Consider a red/blue edge-colouring of $G$; we will outline three classes of colourings of $G$. 

    \begin{figure}[ht!]
	   \ctikzfig{figures/f5_fig1}
	   \caption{Illustration of $2$-edge-colourings of $N[c]$ of \textsc{Type I} and \textsc{Type II}, where black (dashed) edges may be coloured either red or blue.}
       \label{fig:f5_fig1}
    \end{figure}

    A colouring of \textsc{Type I} is one such that there exists a monochromatic embedding of $S_{2,1,1}$ in the subgraph induced by $N[c]$ where the vertex of degree $3$ in $S_{2,1,1}$ is mapped to $c$, as in Figure~\ref{fig:f5_fig1}. In this case we are immediately done.

    A colouring of \textsc{Type II} is one such that there exists some $i$ where all edges incident to $v_i$ in the subgraph induced by $N[c]$ are all coloured the same, as in Figure~\ref{fig:f5_fig1}. Note that the colourings of \textsc{Type I} and \textsc{Type II} are not necessarily disjoint.

    We will show that \textsc{Type II} colourings have a monochromatic $S_{2,1,1}$. Without loss of generality, let all the edges incident to $v_i$ in the subgraph induced by $N[c]$ be coloured~blue. 
    
    \case{A.1} Suppose at least one of the edges $\{v_{i+1}, v_{i+2}\}$, $\{v_{i-1}, v_{i-2}\}$ or $\{c, v_j\}$ for some $j \notin \{i-1, i+1\}$ is blue. Then there is a blue $S_{2,1,1}$ in this colouring of $G$.
    
    \case{A.2} Otherwise suppose that all of the edges $\{v_{i+1}, v_{i+2}\}$, $\{v_{i-1}, v_{i-2}\}$ and $\{c, v_j\}$ for all $j \notin \{i-1, i+1\}$ are coloured red, as in Figure~\ref{fig:f5_fig2}. Since $\deg(c) \geq 6$ then there is some vertex $v_j \in N(c)$ such that $v_j \notin \{v_{i-2}, v_{i-1}, v_i, v_{i+1}, v_{i+2}\}$. Then the edges $\{v_{i+1}, v_{i+2}\}$, $\{v_{i+2}, c\}$, $\{c, v_{i-2}\}$, $\{c, v_j\}$ form a red $S_{2,1,1}$ in $G$. 

    \begin{figure}[ht!]
	   \ctikzfig{figures/f5_fig2}
	   \caption{A $2$-edge-colouring of \textsc{Type II} satisfying Case A.2, with a red $S_{2,1,1}$, where black (dashed) edges may be coloured either red or blue.}
       \label{fig:f5_fig2}
    \end{figure}
    
    Hence all \textsc{Type II} colourings have a monochromatic $S_{2,1,1}$ in $G$. Lastly, a colouring is of \textsc{Type III} if it is neither \textsc{Type I} or \textsc{Type II}. We will next show that \textsc{Type III} colourings have a monochromatic $S_{2,1,1}$. 

    If there are at least three edges incident to $c$ which are coloured red and at least three edges incident to $c$ which are coloured blue, then the colouring is of \textsc{Type I}: There exists $i$ such that $\{v_i, c\}$ and $\{v_{i+1}, c\}$ are coloured differently, say red and blue respectively (without loss of generality). If the edge $\{v_i, v_{i+1}\}$ is coloured blue, then $\{v_i, v_{i+1}\}$ and $\{v_i, c\}$ along with two other blue edges incident to $c$ give a blue $S_{2,1,1}$. Otherwise, if the edge $\{v_i, v_{i+1}\}$ is coloured red, then $\{v_i, v_{i+1}\}$ and $\{v_{i+1}, c\}$ along with two other red edges incident to $c$ give a red $S_{2,1,1}$. Figure~\ref{fig:f5_fig3} illustrates this.

    \begin{figure}[h!]
	   \ctikzfig{figures/f5_fig3}
	   \caption{A red/blue $S_{2,1,1}$ in any colouring with at least three red edges and at least three blue edges incident to $c$, where black (dashed) edges may be coloured either red or blue.}
       \label{fig:f5_fig3}
    \end{figure}

    Hence, in a \textsc{Type III} colouring we have $k = \deg(c) \geq 7$ and (without loss of generality) there are at most two red edges incident to $c$ and at least $k - 3 \geq 4$ blue edges incident to $c$. If the number of red edges incident to $c$ is at most one, then since we have a \textsc{Type III} colouring, the edges on the cycle $v_1, \dots, v_k$ are all red (otherwise there exists some $v_i$ such that $\{c, v_i\}$ is blue and at least one of the edges $\{v_{i-1}, v_i\}$, $\{v_i, v_{i+1}\}$ is also blue).

    Otherwise suppose that the number of red edges incident to $c$ is two. Suppose that for some $i$, the edge $\{c, v_i\}$ is red and the edges $\{c, v_{i-1}\}$, $\{c, v_{i+1}\}$ are blue. Since there are at least four edges incident to $c$ coloured blue and the colouring is not of \textsc{Type I}, the edges $\{v_{i-1}, v_i\}$, $\{v_i, v_{i+1}\}$ are red. But such a colouring is not possible, as the edges $\{c, v_i\}$, $\{v_{i-1}, v_i\}$, $\{v_i, v_{i+1}\}$ are all red and hence the colouring would be of \textsc{Type II}. Hence all the red edges incident to $c$ must correspond to consecutive vertices on the cycle $v_1, \dots, v_k$.

    Hence there exists some $i$ such that $\{c, v_i\}$ and $\{c, v_{i+1}\}$ are red, and all other edges incident to $c$ are blue. Since the colouring is neither of \textsc{Type I} or \textsc{Type II}, the only possible colouring of the cycle $v_1, \dots, v_k$ is $\{v_i, v_{i+1}\}$ coloured blue and all other edges red. 

    Collating everything together, we obtain that in a \textsc{Type III} colouring we have $k = \deg(c) \geq 7$, and, without loss of generality, either there is most one red edge incident to $c$ and the edges on the cycle $v_1, \dots, v_k$ are coloured blue, or there are exists some $i$ such that $\{c, v_i\}$ and $\{c, v_{i+1}\}$ are the only red edges incident to $c$ and $\{v_i, v_{i+1}\}$ is the only blue edge on the cycle $v_1, \dots, v_k$. Since there are at least three vertices in $N(c)$ of degree at least four, then there exists some $j$ such that $\{c, v_j\}$ is blue, $v_j$ has degree at least four, and the edges $\{v_{j-2}, v_{j-1}\}$, $\{v_{j-1}, v_j\}$ and $\{v_j, v_{j+1}\}$ are all red (since $k \geq 7$ and there is at most one blue edge on the cycle $v_1, \dots, v_k$).

    \begin{figure}[htbp!]
	   \ctikzfig{figures/f5_fig5}
	   \caption{A $2$-edge-colouring of \textsc{Type III} satisfying Case B.2.}
       \label{fig:f5_fig5}
    \end{figure}

    Then there is some $x \notin N[c]$ such that $x$ is adjacent to $v_j$. If $\{v_j, x\}$ is red then there is a red $S_{2,1,1}$, namely by the edges $\{v_{j-2}, v_{j-1}\}$, $\{v_{j-1}, v_j\}$, $\{v_j, v_{j+1}\}$ and $\{v_j, x\}$, as in Figure~\ref{fig:f5_fig5}~(a). Else if $\{v_j, x\}$ is blue, then since there exists two vertices $u, v \in N(c)$ distinct from $v_j$ such that the edges $\{u, c\}$ and $\{v, c\}$ are also blue, together with the edges $\{c, v_j\}$ and $\{v_j, x\}$ they give a blue $S_{2,1,1}$, as in Figure~\ref{fig:f5_fig5}~(b).

    Consequently, every \textsc{Type III} colouring has a monochromatic $S_{2,1,1}$. It follows that every $2$-edge-colouring of a maximal planar graph $G$ with $\Delta \geq 6$ has a monochromatic $S_{2,1,1}$. Since every maximal planar graph on $n \geq 13$ vertices has a vertex of degree at least six, $\{S_{2,1,1}, S_{2,1,1}\}$ is unavoidable in $\KMP$ with $\MP{S_{2,1,1}}{S_{2,1,1}} \leq 13$. 

    Figure \ref{fig:appendixB3} in Appendix \ref{sec:small_cexs} illustrates a maximal planar graph on $n = 8$ vertices with no monochromatic $S_{2,1,1}$. By an exhaustive computer search, as outlined in Appendix \ref{sec:code}, one can verify that $n = 9$ is the smallest $n$ such that every $2$-edge-colouring of every maximal planar graph on $n$ vertices contains a monochromatic copy of $S_{2,1,1}$. Hence~$\MP{S_{2,1,1}}{S_{2,1,1}} \geq 9$. 
    
    To show that $\MP{S_{2,1,1}}{S_{2,1,1}} = 9$, it remains to show that every $2$-edge-colouring of every maximal planar graph with $\Delta \leq 5$ and $10 \leq n \leq 12$ vertices has a monochromatic $S_{2,1,1}$. Let $G$ be a maximal planar graph on $n = 10$ vertices with $\Delta \leq 5$; if $\delta = 3$ then there is a vertex $v$ in $G$ such that $G - v$ is a maximal planar graph on $9$ vertices. In particular, every $2$-edge-colouring of $G - v$ must have a monochromatic~$S_{2,1,1}$. 

    \begin{figure}[htbp!]
	   \ctikzfig{figures/mp_s211_n10_n12_exceptions}
	   \caption{The two maximal planar graphs on $10 \leq n \leq 12$ vertices with $4 \leq \delta \leq \Delta \leq 5$.}
       \label{fig:f5_fig6}
    \end{figure}
    
    It remains to consider those maximal planar graphs on $n = 10$ vertices with $4 \leq \delta \leq \Delta \leq 5$. There is only one such graph, illustrated in Figure~\ref{fig:f5_fig6}~(a), for which by exhaustive computer search we can verify that every $2$-edge-colouring has a monochromatic $S_{2,1,1}$. Having verified the case when $n = 10$, we repeat the same analysis for $n = 11$ and $n = 12$. There is only one other graph with $4 \leq \delta \leq \Delta \leq 5$ in these cases, illustrated in Figure~\ref{fig:f5_fig6}~(b). Once again, by exhaustive computer search we can verify that every $2$-edge-colouring of this graph has a monochromatic $S_{2,1,1}$. 
    
    Combining everything together, we obtain that $\MP{S_{2,1,1}}{S_{2,1,1}} = 9$.
\end{proof}

We are now in a position to prove Theorem \ref{thm:diag_H_conn_mp}, which we restate for convenience.

\MPDiagonal*

\begin{proof}
    As a consequence of Proposition \ref{prop:forst_mp}, any connected graph $H$ such that $\{H, H\}$ is unavoidable in $\KMP$ must be a tree, with every component having diameter at most five. From planar T{\'u}ran, we established in Corollary \ref{cor:mp_small_trees} that for every tree $H$ on at most four vertices, $\{H, H\}$ is unavoidable in $\KMP$. Therefore it remains to consider trees on $n \geq 5$ vertices. 

    On $n = 5$ vertices, there are three trees to consider: $P_5$, $S_{2,1,1}$ and $K_{1, 4}$. By Theorems \ref{thm:mp_p5} and \ref{thm:mp_fork}, we have that $\{P_5, P_5\}$ and $\{S_{2,1,1}, S_{2,1,1}\}$ are both unavoidable in $\KMP$, whilst by Proposition \ref{prop:k1_4} we have that $\{K_{1,4}, K_{1,4}\}$ is avoidable in $\KMP$.

    \begin{figure}[htbp!]
	   \ctikzfig{figures/trees_n6}
	   \caption{All trees on $n = 6$ vertices are avoidable in $\KMP$.}
       \label{fig:trees_n6}
    \end{figure}

    Next we show that all trees on $n \geq 6$ are avoidable in $\KMP$. Figure~\ref{fig:trees_n6} illustrates all trees on $n = 6$ vertices, along with the corresponding lemmas showing they are avoidable $\KMP$. By monotonicity, all trees on $n \geq 6$ vertices are avoidable in $\KMP$. The result follows.
\end{proof}

\section{Open problems}
\label{sec:problems}

We conclude with a number of open problems arising from this work, beginning with the following problem on which outerplanar graphs $H$ are such that $\{H, K_2\}$ is unavoidable in $\KMOP$, as further discussed in Remark \ref{rem:mop_H_K2_problem}.

\begin{problem}
    Characterise all outerplanar graphs $H$ for which there exists an integer $N(H) \geq |H|$ such that for all $n \geq N(H)$, every maximal outerplanar graph on $n$ vertices contains a copy of $H$.
\end{problem}
 
We have the following problem for the non-connected diagonal case in $\KMP$.

\begin{problem}
    Completely determine all disconnected planar graphs $H$ such that $\{H, H\}$ is unavoidable in $\KMP$.
\end{problem}

More ambitious is the complete characterisation of all unavoidable pairs $\{H, F\}$ in $\KMP$, which for the pairs $\{H, K_2\}$ includes the complete characterisation of all planar graphs $H$ for which there exists an integer $N(H) \geq |H|$ such that for all $n \geq N(H)$, every maximal planar graph on $n$ vertices contains a copy of $H$.

\begin{problem}
    Completely determine all pairs of graphs which are unavoidable in $\KMP$.
\end{problem}

\bibliographystyle{plain}
\bibliography{mp_refs}

\appendix
\newpage

\section{Small instances of graphs with avoidable $2$-edge-colourings for unavoidable pairs}
\label{sec:small_cexs}

\vspace*{-2mm}

\begin{figure}[H]
   \ctikzfig{figures/mop_s221_p3_n6}
   \caption{Example of a maximal outerplanar graph on $n = 6$ vertices with a red/blue edge-colouring having no red $S_{2,2,1}$ and no blue $P_3$.}
   \label{fig:appendixB1}
\end{figure}

\vspace*{-5mm}

\begin{figure}[H]
   \ctikzfig{figures/mop_p5_p4_n8}
   \caption{Example of a maximal outerplanar graph on $n = 8$ vertices with a red/blue edge-colouring having no red $P_5$ and no blue $P_4$.}
   \label{fig:appendixB2}
\end{figure}

\vspace*{-5mm}

\begin{figure}[H]
   \ctikzfig{figures/mp_p5_n5}
   \caption{Example of a maximal planar graph on $n = 5$ vertices with a red/blue edge-colouring having no monochromatic $P_5$.}
   \label{fig:appendixB4}
\end{figure}

\vspace*{-5mm}

\begin{figure}[H]
   \ctikzfig{figures/mp_s211_n8}
   \caption{Example of a maximal planar graph on $n = 8$ vertices with a red/blue edge-colouring having no monochromatic $S_{2,1,1}$.}
   \label{fig:appendixB3}
\end{figure}

\newpage
\section{Mathematica code}
\label{sec:code}

We use the following Mathematica \cite{Mathematica} code to exhaustively check whether a given pair $\{H, F\}$ is (un)avoidable in $\KMP$ or $\KMOP$ up to a (small) given number of vertex $n$, typically $n \leq 9$. Small instances of maximal (outer)planar graphs were generated using \texttt{plantri} \cite{plantri}.

\begin{mmaCell}[
  moredefined={RedBlueColouring,VertexList,EdgeList,Graph},
  local={vertices,edges,colouring,redEdges,blueEdges},
  pattern={g_,g,i_,i}
]{Code}
(* The following function generates the i^th red/blue edge 
   colouring of an input graph g, and for the colouring 
   returns a pair of graphs {R, B} having the same vertex
   set as g and monochromatic edge sets from the colouring *)
RedBlueColouring[g_, i_] := 
Module[{vertices, edges, colouring, redEdges, blueEdges},
   vertices = VertexList[g];
   edges = EdgeList[g];
   colouring = IntegerDigits[i - 1, 2, Length[edges]];
   redEdges = Pick[edges, colouring, 1];
   blueEdges = Pick[edges, colouring, 0];
   Return[{Graph[vertices, redEdges],
           Graph[vertices, blueEdges]}]
];
\end{mmaCell}

\begin{mmaCell}[
  moredefined={IsomorphicSubgraphQ,CheckPairs},
  pattern={targetPair_List,targetPair,hostPair_List,hostPair}
]{Code}
(* The following function considers a `target pair' {H, F}
   and a `host pair' {R, B} and checks if R has a subgraph
   isomorphic to H or if B has a subgraph isomorphic to F *)
CheckPairs[targetPair_List, hostPair_List] := 
  IsomorphicSubgraphQ[targetPair[[1]], hostPair[[1]]] || 
   IsomorphicSubgraphQ[targetPair[[2]], hostPair[[2]]];
\end{mmaCell}

\begin{mmaCell}[
  moredefined={RedBlueColouring,UnavoidablePair,CheckPairs,AllTrue,EdgeList},
  pattern={targetPair_List,targetPair,g_Graph,g},
  local={hostPair,i}
]{Code}
(* The following function considers a graph g and a
   `target pair' {H, F}, and checks if every red/blue
   edge colouring of g has a red subgraph isomorphic
   to H or a blue subgraph isomorphic to F *)
UnavoidablePair[g_Graph, targetPair_List] := Module[{i},
  For[i = 1, i <= 2^Length[EdgeList[g]], i++,
   If[Not[CheckPairs[targetPair, RedBlueColouring[g, i]]], 
    Return[False]]];
  Return[True]
];
\end{mmaCell}

\end{document}